\documentclass{amsart}
\usepackage[T1]{fontenc}
\usepackage{amsmath}
\usepackage{amssymb}
\usepackage{hyperref}
\usepackage{array}

\usepackage{xtab,booktabs,xcolor}
\newtheorem{prop}{Proposition}
\newtheorem{thm}{Theorem}
\newtheorem{dfn}{Definition}
\newtheorem{cor}{Corollary}
\newtheorem{remark}{Remark}
\newtheorem{example}{Example}

\title[Foliated Geometry of Inverse Problems]{Foliated Geometry of Inverse Problems: Torsion, Curvature Duality, and Near-Associativity}

\author{N. C. Combe \and H. K. Nencka}
\address{De Vinci Research Center, De Vinci Higher Education, Paris, France}
\email{noemie.combe@devinci.fr}

\begin{document}

\begin{abstract}
We present a geometric framework for reconstruction problems based on Vaisman foliations and Atiyah--Molino sequences. Independent projections induce transverse foliations and dual connections; vanishing torsion and curvature duality guarantee unique, path-independent reconstruction, while obstructions yield non-associative quasigroupoids. Toric symmetry provides equivariant uniqueness. Applications to generative AI imputation and cryo-electron microscopy demonstrate the framework's practical power, unifying differential geometry with data-driven inverse problems.

\end{abstract}

\maketitle

\keywords{\bf Keywords: Shape Theory $\bullet$ Fiber bundles $\bullet$ Foliations.}
\section{Introduction}

Reconstruction problems—recovering a hidden structure from incomplete or indirect observations—are ubiquitous in mathematics and the applied sciences. Examples range from determining the three‑dimensional fold of a protein in cryo‑electron microscopy to inferring missing entries in high‑dimensional data. In all such inverse problems, a fundamental difficulty persists: infinitely many configurations may be compatible with the same partial data. Traditional iterative methods therefore struggle with uniqueness, noise sensitivity, and computational complexity.

This paper substantially extends the geometric framework introduced in our earlier work \cite{CN1,CN2}, which generalized the approach of Gelfand and Goncharov. While those papers established foundational results, the present article develops the theory in far greater depth and introduces several new directions (Section~\ref{S:PD}, Theorem~\ref{T:1}, and Section~\ref{S:Toric}, culminating in Theorem~\ref{T:Last}). Moreover, we provide a detailed account of concrete applications to artificial intelligence (Section~\ref{S:IA}) and cryo‑electron microscopy (Section~\ref{S:Toric}), where toric varieties emerge as a central geometric tool.

Our approach is unified by the language of differential geometry: we recast reconstruction as the extraction of symmetries and invariants encoded in the data, drawing on the Vaisman theory of foliations and the Atiyah--Molino exact sequence for fiber bundles.

\subsection{Vaisman and Atiyah--Molino Frameworks}
The \textbf{Vaisman framework} organizes the ambiguity of underdetermined inverse problems into transverse foliations of the configuration space. Each leaf comprises structures that are indistinguishable under a given projection, transforming non‑uniqueness into a stratified hierarchy of constraints. The intersection of these foliations isolates a unique solution and confers robustness to noise.

The \textbf{Atiyah--Molino framework} views reconstruction through the lens of fiber bundles. Tangent directions capture local deformations, while normal directions encode global invariants such as centroids. The Atiyah--Molino exact sequence reduces the reconstruction problem to algebraic equations, avoiding brute‑force searches. The torsion tensor—a curvature‑like invariant—quantifies noise propagation and enables error‑bounded recovery.

\subsection{Scope and Applications}
The synthesis of differential geometry, algebraic topology, and integral analysis presented here resolves classical ambiguities and suggests novel algorithms. We illustrate the framework with two applications: geometric imputation of missing data in generative AI, and toric symmetry exploitation in cryo‑EM. Further potential applications include medical imaging (tumour reconstruction from sparse MRI) and quantum state tomography.

\subsection{Outline}
Section~\ref{S:1} recalls the foundational results of \cite{CN1}. Section~\ref{S:PD} introduces path dependence and its algebraic consequences: associativity corresponds to unique reconstruction, while path dependence yields a near‑associative Moufang structure and a family of solutions. Sections~\ref{S:AM}--\ref{S:Toric} reformulate the theory in the Atiyah--Molino language and connect it to toric varieties, which play a key role in cryo‑EM (Section~\ref{S:Toric}).

\section{Projections and Inverse Morphisms}\label{S:1}
We consider at least two independent { rational} projections of a three-dimensional object onto distinct two-dimensional planes. The existence of such independent projections is necessary to ensure sufficient information for reconstruction. This leads to the problem of defining an inverse function that maps the planar projections back to the original volumetric object.

\begin{itemize}
\item A single projection collapses three dimensional information onto a plane, losing depth and orientation data. For example, in electron microscopy, a single micrograph of a particle provides no information about its tilt angle relative to the imaging axis.

\item By considering projections along distinct planes, the system gains orthogonal constraints (e.g., tilt angles and rotation axes) that resolve ambiguities. This aligns with Gelfand and Goncharov’s method, which uses statistical properties of projections, such as first moments of plane sections, to derive orientation parameters.
\end{itemize}

\subsection{ The Radon Transform}  The Radon transform $Rf(\theta,t)$ integrates a function $f(x,y,z)$ along planes parameterized by angle 
$\theta$ and offset $t$. Its inverse requires integrating over all possible angles, but practical applications (e.g., cryo-EM) use finite projections. 

 For discrete objects (e.g., particles on a line), the inverse function can be constructed as a linear system where each projection contributes equations. Two independent projections ensure the system is determined (solvable) under non-degenerate conditions \cite{GG}.
\medskip 

\subsection{Neifeld's principle}  A key idea is to leverage the results of Neifeld on geometric projective 2-dimensional spaces. From these results, it follows that the independent projections induce a pair of independent connections. These connections allow us to extract curvature-related information in two dimensions, including the Riemann tensor and Ricci curvature. Understanding whether these curvature tensors arise from an associative or commutative algebraic structure further informs the reconstruction process.

\, 

Neifeld’s involution principle bridges projective geometry and differential geometry, enabling a dual-connection framework for 3D reconstruction. By interpreting projections as inducing independent connections on $\mathbb{C}P^2$, this approach generalizes classical integral geometry methods and enhances their robustness, particularly in complex or symmetric settings. Further work could explore applications to quantum state tomography (via projective Hilbert spaces) or algebraic varieties in $\mathbb{CP}^n$.

\begin{prop}[Thm 1. \cite{CN1}]\label{P:1} Let \( O \subset \mathbb{CP}^3 \) be a smooth, non-symmetric 3D object (real 3-dimensional submanifold), and let \( \Pi_1, \Pi_2: \mathbb{CP}^3 \dashrightarrow \mathbb{CP}^2 \) be two independent rational projections onto distinct complex projective planes. Assume:

\begin{enumerate}
\item  {\bf Non-degeneracy of projections}: The restrictions of \( \Pi_1 \) and \( \Pi_2 \) to \( O \) are immersive (the differentials \( d\Pi_i \) have full rank).\footnote{The immersiveness of the projection map \( \Pi_i \), when restricted to an object \( O \subset \mathbb{CP}^3 \), cannot be presumed in general. Rather, it depends on the geometric configuration of \( O \) relative to the center of projection. Let \( q \in \mathbb{CP}^3 \) denote the projection center. If \( q \) is chosen generically—that is, such that it does not lie on any projective tangent plane to \( O \)—then the restriction \( \Pi_i|_O \) is an immersion. However, if \( q \) lies on a tangent plane to \( O \), then \( d\Pi_i|_O \) fails to be injective, and immersiveness is lost. Thus, the immersiveness of \( \Pi_i \) is guaranteed precisely when the projection center avoids the contact locus of \( O \)'s tangent planes.
}

\item  {\bf Involution symmetry:} The projections satisfy \(\Pi_2 = \iota \circ \Pi_1 ,\) where \( \iota: \mathbb{CP}^2 \to \mathbb{CP}^2 \) is an anti-holomorphic involution (e.g., a polarity induced by the Fubini-Study metric).

\end{enumerate}

Each projection \( \Pi_i \) defines a holomorphic line bundle \( L_i \to \mathbb{CP}^2 \) equipped with a connection \( \nabla_i \) derived from the Fubini-Study metric. Then, the  involution \( \iota \) induces a duality \[ L_1 \leftrightarrow L_2^* ,\] making \( (\nabla_1, \nabla_2) \) a dual pair.
\end{prop}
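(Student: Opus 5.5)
The plan is to make the statement concrete first and then reduce everything to a single computation on $\mathbb{CP}^2$. We interpret $L_i$ as the pullback of the tautological bundle (or of $\mathcal{O}(1)$) along the relevant map. Concretely, $L_1 = \mathcal{O}_{\mathbb{CP}^2}(1)$ on the target of $\Pi_1$, and it is pulled back to $O$ via $\Pi_1|_O$, which is legitimate because $O$ avoids the indeterminacy locus. The connection $\nabla_1$ is the Chern connection of the Fubini--Study Hermitian metric $h_{FS}$. Since $\Pi_2=\iota\circ\Pi_1$, the natural candidate is $L_2 := \Pi_2^*\mathcal{O}(1) = \Pi_1^*\iota^*\mathcal{O}(1)$, with connection $\nabla_2 = \Pi_1^*\iota^*\nabla^{FS}$. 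Hypothesis (1), immersiveness, is used only to ensure the pulled-back connections are honest connections on a bundle over the real $3$-manifold $O$, with curvature $\Pi_i^*\omega_{FS}$ nondegenerate on the image of $d\Pi_i$. This keeps the pair from being trivial.

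The key step is a lemma on $\mathbb{CP}^2$. For an anti-holomorphic involution $\iota$ that is an isometry of $h_{FS}$ (complex conjugation composed with a unitary, which covers the polarity example), we will show two things:
\[
\iota^*\omega_{FS} = -\omega_{FS}, \qquad \iota^*\mathcal{O}(1)\cong \overline{\mathcal{O}(1)}\cong \mathcal{O}(1)^* .
\]
The second isomorphism is where the real structure enters. Since $\iota$ is anti-holomorphic, the pullback $\iota^*\mathcal{O}(1)$ is naturally an anti-holomorphic bundle, namely $\overline{\mathcal{O}(1)}$. The Hermitian metric then gives a complex-linear isomorphism $\overline{L}\to L^*$ via $v\mapsto h(\cdot,v)$. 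We will check that this isomorphism intertwines the Chern connections: the connection induced on $L^*$ from $\nabla^{FS}$ satisfies
\[
d\langle s,\phi\rangle = \langle\nabla s,\phi\rangle + \langle s,\nabla^*\phi\rangle ,
\]
and metric compatibility of $\nabla^{FS}$ is exactly this identity under $v\mapsto h(\cdot,v)$. Pulling back by $\Pi_1$ then gives $L_2\cong L_1^*$ with $\nabla_2$ corresponding to $\nabla_1^*$. This is the precise meaning of ``$(\nabla_1,\nabla_2)$ is a dual pair'': for sections $s$ of $L_1$ and $\phi$ of $L_2$,
\[
X\langle s,\phi\rangle = \langle\nabla_{1,X}s,\phi\rangle+\langle s,\nabla_{2,X}\phi\rangle ,
\]
in the sense of Amari--Nagaoka duality.

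As a consistency check, we will compare curvatures. We have $F_{\nabla_2} = \Pi_1^*\iota^*F_{FS} = -\Pi_1^*F_{FS} = -F_{\nabla_1}$, matching $c_1(L^*)=-c_1(L)$. This also shows the sign is forced, so the duality cannot be an isomorphism $L_2\cong L_1$ unless the curvature vanishes, which immersiveness rules out.

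The main obstacle is making the identification $\iota^*\mathcal{O}(1)\cong\mathcal{O}(1)^*$ rigorous. An anti-holomorphic map does not pull holomorphic bundles back to holomorphic bundles, so ``duality'' has to be stated at the level of Hermitian line bundles with unitary connections rather than holomorphic ones. Alternatively, one can regard $\overline{L}$ as holomorphic with respect to the conjugate complex structure. We would first try the conjugate-bundle formulation with an explicit check in affine charts: there $h_{FS}=(1+|z|^2)^{-1}$, the Chern connection form is $-\partial\log(1+|z|^2)$, and $\iota(z)=\bar z$. A one-line computation should confirm that the pulled-back form is the negative conjugate, as duality requires. We also need the polarity in the example to be a genuine isometric anti-holomorphic involution, and we will restrict to that case if necessary.
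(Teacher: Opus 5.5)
Your argument is correct, but it reaches the duality of connections by a different route from the paper. The paper first asserts $\iota^*\mathcal{O}(1)\cong\mathcal{O}(-1)$ using linear forms and the identification $v\mapsto H(\cdot,v)$. It then compares curvatures, $F_{\iota^*\nabla_1}=\iota^*(c\,\omega_{FS})=-c\,\omega_{FS}=F_{\nabla_1^*}$, and uses $H^1(\mathbb{CP}^2,\mathbb{R})=0$ to conclude that the two Hermitian connections are gauge-equivalent. You instead build one explicit isomorphism $\iota^*L\cong\overline{L}\cong L^*$, the second map being $v\mapsto h(\cdot,v)$, and show that it carries $\iota^*\nabla^{FS}$ to the dual connection. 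The curvature relation is then only a corollary.

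Your route has three advantages:
\begin{enumerate}
\item It gives literal equality under a canonical identification. The paper's ``they must coincide'' really only yields gauge equivalence.
\item It makes explicit the conjugate-linearity, and the smooth-versus-holomorphic issue, that the paper's step ``$\mathcal{O}(1)\cong\mathcal{O}(1)^*$ via $v\mapsto H(\cdot,v)$'' passes over.
\item It is local and independent of the base's topology. The paper's argument needs $H^1=0$, so it must be run on $\mathbb{CP}^2$ and then pulled back to $O$.
\end{enumerate}
The paper's route is shorter: it needs only $\iota^*\omega_{FS}=-\omega_{FS}$ and simple connectivity, and no lift of $\iota$ to the bundle.

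The price on your side is one step that your metric-compatibility argument does not cover, and you should state it. The conjugate-linear lift $w\mapsto U\overline{w}$ of $\iota$ to the tautological lines preserves $\nabla^{FS}$. This holds because the lift commutes with $d$ on $\mathbb{C}^3$ and with orthogonal projection onto the lines.

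There are also two small slips.
\begin{itemize}
\item In the chart check, $\theta=-\partial\log(1+|z|^2)$ pulls back to $\iota^*\theta=-\bar\partial\log(1+|z|^2)=\overline{\theta}$. That is the conjugate, not the negative conjugate. It matches the dual connection, whose form is $-\theta$ in the frame $e^*$, once you pass to the frame $h(e,e)\,e^*=h(\cdot,e)$, which is the image of $\overline{e}$.
\item On the real $3$-manifold $O$, the form $\Pi_1^*\omega_{FS}$ is nonzero (rank $2$) but cannot be nondegenerate. In any case, immersiveness is not needed for the conclusion on $\mathbb{CP}^2$.
\end{itemize}
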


The polarity associated with the Fubini--Study metric is a classical construction in complex differential geometry. The Fubini--Study metric endows \( \mathbb{CP}^n \) with a canonical Hermitian inner product \( \langle -, - \rangle_H \). In the case \( n = 2 \), this induces a polarity assigning to each point \( [z] \in \mathbb{CP}^2 \) its polar hyperplane
\[
\mathbf{H}_p := \{ [w] \in \mathbb{CP}^2 \mid \langle z, w \rangle_H = 0 \},
\]
consisting of all points orthogonal to \( [z] \). These hyperplanes are naturally elements of the dual projective space \( (\mathbb{CP}^2)^\vee \), and the Hermitian structure provides a canonical identification \( \mathbb{CP}^2 \cong (\mathbb{CP}^2)^\vee \). This identification defines an anti-holomorphic involution
\[
\iota : \mathbb{CP}^2 \longrightarrow \mathbb{CP}^2,
\]
under which a point \( p \in \mathbb{CP}^2 \) is mapped to the point corresponding to its polar hyperplane \( \mathbf{H}_p \). This involution captures the deep symmetry between points and hyperplanes induced by the projective Hermitian geometry of \( \mathbb{CP}^2 \).

\begin{proof}
\textbf{0. Geometric origin of the line bundle.}
A rational projection $\Pi: \mathbb{CP}^3 \dashrightarrow \mathbb{CP}^2$ with center $q \notin O$ restricts to a holomorphic submersion $\pi = \Pi|_O : O \to \mathbb{CP}^2$ (by the generic choice of $q$). The fibers of $\pi$ are discrete, but the map itself can be viewed as the projection of a complex line bundle when we pass to the \emph{normal geometry} of the inclusion $O \subset \mathbb{CP}^3$. More explicitly, let $N_{O/\mathbb{CP}^3}$ be the normal bundle of $O$ in $\mathbb{CP}^3$. The differential of the inclusion $\mathbb{CP}^2 \hookrightarrow \mathbb{CP}^3$ (as the target plane) identifies the normal direction to the plane with the fiber of the line bundle $\mathcal{O}_{\mathbb{CP}^2}(1)$. Pushing forward the exact sequence
\[
0 \to TO \to T\mathbb{CP}^3|_O \to N_{O/\mathbb{CP}^3} \to 0
\]
via $\pi$ yields a holomorphic line bundle $L \to \mathbb{CP}^2$, which is isomorphic to either $\mathcal{O}(1)$ or $\mathcal{O}(-1)$ depending on the orientation of the projection. The involution $\iota$ reverses this orientation, thereby sending $L$ to its dual. Thus, each projection $\Pi_i$ canonically determines a line bundle $L_i \to \mathbb{CP}^2$ together with a connection $\nabla_i$ induced by the Fubini--Study metric on $\mathbb{CP}^3$.

\textbf{1. Duality of line bundles.}

Let $\mathcal{O}_{\mathbb{CP}^2}(1)$ be the hyperplane bundle on $\mathbb{CP}^2$. Its dual is
\[
\mathcal{O}_{\mathbb{CP}^2}(-1) = \mathcal{O}_{\mathbb{CP}^2}(1)^*.
\]
The involution $\iota$ is induced by the Fubini--Study polarity, which is defined by a Hermitian form $H$ on $\mathbb{C}^3$. In homogeneous coordinates, for
$z = [z_0 : z_1 : z_2] \in \mathbb{CP}^2,$

we have $\iota(z) = H^{-1}\overline{z}^{\,T},$ where $H = \operatorname{diag}(1,1,1)$ in the standard case.

The key isomorphism is
$\iota^*\mathcal{O}_{\mathbb{CP}^2}(1) \cong \mathcal{O}_{\mathbb{CP}^2}(-1).$

This holds because sections of $\mathcal{O}_{\mathbb{CP}^2}(1)$ are linear forms
$s = \sum_i a_i z_i.$ The pullback section $\iota^*s$ is $s \circ \iota = \sum_i a_i (H^{-1}\overline{z}^{\,T})_i.$

Since the Fubini--Study metric identifies $\mathcal{O}_{\mathbb{CP}^2}(1) \cong \mathcal{O}_{\mathbb{CP}^2}(1)^*$ via
$v \mapsto H(\,\cdot\,, v),$ we obtain
$\iota^*\mathcal{O}_{\mathbb{CP}^2}(1) \cong \mathcal{O}_{\mathbb{CP}^2}(-1).$

Thus, if $L_1$ is a line bundle on $\mathbb{CP}^2$, and we define $L_2 = \iota^*L_1,$ then
$L_2 \cong L_1^*.$

\medskip

\textbf{2. Duality of connections.}

The Chern connection $\nabla_1$ on
\[
L_1 = \mathcal{O}_{\mathbb{CP}^2}(1)
\]
is defined by the Fubini--Study metric. Its curvature is proportional to the Kähler form $\omega_{FS}$.

Given a connection $\nabla$ on a vector bundle $E \to M$, its curvature $F_\nabla$ is the $2$-form-valued endomorphism measuring the failure of $\nabla$ to be flat:
\[
F_\nabla(X,Y)
=
\nabla_X \nabla_Y
-
\nabla_Y \nabla_X
-
\nabla_{[X,Y]},
\]
where $X,Y$ are vector fields on $M$.

Since $\iota$ is an isometry satisfying
\[
\iota^*\omega_{FS} = -\omega_{FS},
\]
the pullback connection $\iota^*\nabla_1$ on
\[
L_2 = \iota^*L_1
\]
satisfies
\[
F_{\iota^*\nabla_1}
=
\iota^*F_{\nabla_1}
=
\iota^*(c\,\omega_{FS})
=
-c\,\omega_{FS},
\]
where $c$ is the proportionality constant.

On the other hand, the dual connection $\nabla_1^*$ on $L_1^*$ has curvature
\[
F_{\nabla_1^*}
=
- F_{\nabla_1}
=
-c\,\omega_{FS}.
\]
Thus,
\[
F_{\iota^*\nabla_1}
=
F_{\nabla_1^*}.
\]

Because $H^1(\mathbb{CP}^2, \mathbb{R}) = 0$, any two connections on a line bundle with equal curvature are gauge-equivalent. Since both $\iota^*\nabla_1$ and $\nabla_1^*$ are Hermitian, they must coincide:
\[
\nabla_2
=
\iota^*\nabla_1
=
\nabla_1^*.
\]

Therefore, we have shown that
\[
L_2 \cong L_1^*
\qquad\text{and}\qquad
\nabla_2 = \nabla_1^*.
\]
Thus $(\nabla_1,\nabla_2)$ form a dual pair under the involution $\iota$.
\end{proof}

\subsubsection{Centroids}

The \emph{centroid} (or first moment) of a geometric shape is its ``average position" in space, computed as the arithmetic mean of all points in the object. For a three dimensional object projected onto a 2D plane, if the object is represented as a set of points or a density distribution, the centroid of its projection onto a plane is the weighted average position of those points in that plane.

For a projection 
\[
\Pi_i: \mathbb{R}^3 \to \mathbb{R}^2,
\]
the centroid \(\bar{p}_i \in \mathbb{R}^2\) is given by
\[
\bar{p}_i = \left( \frac{1}{N}\sum_{k=1}^{N} x_k, \; \frac{1}{N}\sum_{k=1}^{N} y_k \right),
\]
where \((x_k,y_k)\) are the coordinates of the projected points and \(N\) is the total number of points.

The centroid encodes the translational symmetry of the projected data. For example, shifting the three dimensional object in space shifts the centroid linearly in the projection.

\subsubsection{Moment Maps}
A \emph{moment map} generalizes the concept of centroids to algebraic/geometric settings, often encoding symmetry-invariant properties of an object.

The moment map 
\[
\mu_i: O \to \mathbb{C}^2
\]
assigns to the three dimensional object \(O\) the centroid of its projection onto the plane \(\Pi_i\). If \(O\) is a density distribution, \(\mu_i\) computes the first statistical moment (mean) of the projection. For a line or curve, \(\mu_i\) corresponds to the centroid of its projected trace.

Each \(\mu_i\) provides a linear constraint on the orientation of \(O\). Combining \(\mu_1\) and \(\mu_2\) (from two distinct projections) resolves ambiguities in the three dimensional orientation.

\begin{prop}[\cite{CN1}, Thm. 2]\label{P:2}
Let \( \mu_i: O \to \mathbb{C}^2 \) be the first moment maps of \( O \) with respect to \( \Pi_i \), encoding the centroids of the projected data.

\smallskip 

Under these conditions:

\begin{itemize}
\item  {\bf Uniqueness}: The orientation of \( O \) (i.e., its position modulo projective transformations) is uniquely determined by the compatibility of \( \nabla_1 \) and \( \nabla_2 \) acting on \( \mu_1 \) and \( \mu_2 \).

\item {\bf Reconstruction}: The original object \( O \) can be reconstructed as the intersection of the parallel transports along \( \nabla_1 \) and \( \nabla_2 \), applied to the moment maps \( \mu_1 \) and \( \mu_2 \).
\end{itemize}
Explicitly, there exists a unique solution \( v \in T\mathbb{CP}^3 \) --- a direction (tangent) vector modulo scaling --- satisfying:

\begin{equation}\label{E:1}
\begin{cases}
\nabla_1 \mu_1 = v \cdot \omega_1, \\
\nabla_2 \mu_2 = v \cdot \omega_2,
\end{cases}
\end{equation}

where \( \omega_i \) are connection 1-forms (local representatives of the connections) encoding the involution duality: \( \omega_1 = \iota^* \omega_2 \), where $\iota:\mathbb{CP}^2\to\mathbb{CP}^2$ is an involution. 
\end{prop}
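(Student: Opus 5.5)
The plan is to reduce the system~\eqref{E:1} to a pointwise linear-algebra problem on $T\mathbb{CP}^3$, using the duality from Proposition~\ref{P:1}, and then read off existence and uniqueness from a transversality (rank) count.

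\textbf{Step 1: set up the pointwise equations.} Work in a trivializing chart $U\subset\mathbb{CP}^2$ for $L_1$. Proposition~\ref{P:1} gives $L_2\cong L_1^*$ with $\nabla_2=\nabla_1^*$. Locally write $\nabla_1=d+\omega_1$ and $\nabla_2=d+\omega_2$, with $\omega_2=-\omega_1$ in dual frames. Transporting by $\iota$ then gives $\omega_1=\iota^*\omega_2$, which is the compatibility stated in the proposition.

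The moment maps enter through the immersions $\Pi_i|_O$. Since $d\Pi_i$ has full rank by hypothesis (1), $\nabla_i\mu_i$ is a well-defined $1$-form on $O$ with values in $\Pi_i^*L_i$. Each equation $\nabla_i\mu_i=v\cdot\omega_i$ is therefore a linear condition on $v$ at every point.

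\textbf{Step 2: existence.} Take $v$ to be the direction realizing the centroid displacement. Concretely, $\mu_i$ is the projection of the $3$D centroid $\bar{p}$. Differentiating $\mu_i=\Pi_i(\bar{p})$ along $O$ and splitting $d\Pi_i$ into its flat part and its connection part $\omega_i$ shows that $\nabla_i\mu_i$ is $\omega_i$ contracted with the tangent direction $v=[\dot{\bar p}]$. This holds for both $i=1,2$ with the same $v$.

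\textbf{Step 3: uniqueness.}
\begin{itemize}
\item Each equation confines $v$, modulo scaling, to the preimage $\ker(d\Pi_i)+\mathbb{C}\tilde v_i$. This is a linear subspace determined by the projection centre $q_i$.
\item By independence of the projections, the centres $q_1\neq q_2$ and the two kernels are transverse. Hence the two solution sets meet in a single line, i.e.\ a unique point of $\mathbb{P}(T\mathbb{CP}^3)$.
\item The involution condition (2), with $\iota$ anti-holomorphic, is what rules out the degenerate case where the two constraints coincide. It forces $\omega_2=\iota^*\omega_1$ to have curvature of opposite sign, so the two linear constraints cannot be proportional.
\end{itemize}
The reconstruction statement then follows by parallel transporting $\mu_1$ and $\mu_2$ along $\nabla_1$ and $\nabla_2$. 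The resulting families of lifts are the leaves of two transverse foliations, and their intersection recovers $O$. Path-independence of this transport uses only $H^1(\mathbb{CP}^2,\mathbb{R})=0$, as in the proof of Proposition~\ref{P:1}.

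\textbf{Main obstacle.} The hard step is the rank/transversality argument in Step 3. The equations live on different bundles ($L_1$ versus $L_1^*$), and it is only meaningful to compare $v\cdot\omega_1$ with $v\cdot\omega_2$ after using the Hermitian identification from Proposition~\ref{P:1}. I would first try to show that the $2\times 3$ matrix formed by the two constraints has rank $2$ exactly when $q_1\neq q_2$. The non-symmetry of $O$ would then be invoked to exclude stabilizer directions, i.e.\ a residual family of solutions, which would otherwise survive.
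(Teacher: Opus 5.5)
There are genuine gaps, mainly in your reconstruction step and in your uniqueness argument.

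\textbf{Reconstruction.} This step rests on a false claim. In the proof of Proposition~\ref{P:1}, $H^1(\mathbb{CP}^2,\mathbb{R})=0$ is used only to conclude that two connections with the same curvature are gauge-equivalent. It does not make parallel transport path-independent. The connection $\nabla_1$ has curvature $F_{\nabla_1}=c\,\omega_{FS}\neq 0$. So transport around a contractible loop bounding a generic small disc $\Sigma$ is multiplication by $\exp(-\int_\Sigma F_{\nabla_1})\neq 1$, and the same holds for $\nabla_2$. Curvature duality could at best flatten the tensor product $\nabla_1\otimes\nabla_2$, which is the mechanism studied in Section~\ref{S:PD}, but you transport $\mu_1$ and $\mu_2$ separately. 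Hence your \emph{families of lifts} and their intersection are not well defined as stated.

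The paper does not reconstruct by transport at all. It proceeds as follows.
\begin{itemize}
\item It fixes an initial point $p_0$ algebraically by B\'ezout. This, not the pointwise linear algebra, is where non-symmetry of $O$ enters: it makes the intersection transverse.
\item It builds a full frame $v^{(1)},v^{(2)},v^{(3)}$ of $TO$ by solving the system with rotated moment maps $R_{\theta_{ik}}\mu_i$.
\item It shows the resulting distribution is involutive, using the structure equation, the duality $F_{\nabla_1}=-F_{\nabla_2}$, and injectivity of $A_p$.
\item It integrates by Frobenius from $p_0$.
\end{itemize}
Your proposal has no base point. Moreover, the direction $v$ you worked to pin down in Step~3 plays no role in your reconstruction, whereas in the paper it is the first vector of the frame being integrated.

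\textbf{Uniqueness.} This gap is self-inflicted. By reading each equation modulo scaling separately, you replace it by a plane $P_i=\ker d\Pi_i+\mathbb{C}\tilde v_i$ in the three-dimensional space $T_p\mathbb{CP}^3$, and you then need $P_1\neq P_2$. Transversality of the kernels does not give this. If $\tilde v_1,\tilde v_2\in\ker d\Pi_1+\ker d\Pi_2$, both planes equal that sum, and you get a whole pencil of directions. The opposite-sign curvature cannot exclude this either. $F_{\nabla_i}$ is a $2$-form built from derivatives of $\omega_i$, and it says nothing about whether two pointwise linear functionals at $p$ are proportional.

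The paper avoids the issue. Each $\omega_i$ is $\mathbb{C}^2$-valued with $\ker\omega_i=\ker d\Pi_i$. The stacked map $A_p=(\omega_1,\omega_2)\colon T_p\mathbb{CP}^3\to\mathbb{C}^4$ therefore has kernel $\ker d\Pi_1\cap\ker d\Pi_2=0$, because the fibre lines through $p$ from the two distinct centres are distinct. Any two solutions differ by an element of $\ker A_p$, so they coincide. No rank condition on a $2\times 3$ matrix is needed, and your \emph{main obstacle} disappears once you keep both components of each equation.

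\textbf{Existence.} What remains is existence, i.e.\ $(\nabla_1\mu_1,\nabla_2\mu_2)\in\operatorname{im}A_p$, which the paper attributes to $\omega_1=\iota^*\omega_2$. Your Step~2 does not supply an argument of its own here. \emph{Splitting $d\Pi_i$ into a flat part and a connection part} is not a defined operation, so the step simply asserts this same compatibility.
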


We provide an interpretation of the symbol ``$\cdot$'' in the system of equations in Eq.\eqref{E:1}: this denotes contraction of the vector $v$ with the $\mathbb{C}^2$-valued 1-form $\omega_i$. Concretely, $v \cdot \omega_i$ is the vector in $\mathbb{C}^2$ obtained by evaluating each component $\omega_i^j$ on $v$, i.e.  
$v \cdot \omega_i = \begin{pmatrix} \omega_i^1(v) \\ \omega_i^2(v) \end{pmatrix}$.

Note that the involution $\iota$ induces a duality between the connections:
\[\omega_1=\iota^*\omega_2\Longrightarrow \nabla_1=\iota^*\nabla_2.\]

The system has a unique solution \(v\) (modulo scaling) for the following reasons: 

\begin{itemize}
    \item \textbf{Transversality:} The projections \(\Pi_1\) and \(\Pi_2\) are independent (non-coaxial), so their constraints on \(v\) intersect transversally.
    
    \item \textbf{Algebraic Rank:} The equations define a full-rank linear system. For example, if 
    \[
    v = (v_0, v_1, v_2, v_3) \in \mathbb{C}^4,
    \]
    the two equations reduce the degrees of freedom from 4 (modulo scaling) to 1.
    
    \item \textbf{Curvature Compatibility:} The duality 
    \[
    \omega_1 = \iota^*\omega_2
    \]
    ensures that the curvatures 
    \[
    F_{\nabla_1} = -F_{\nabla_2}
    \]
    do not introduce conflicting constraints.
\end{itemize}

 Given a connection $\nabla$ on a vector bundle $E\to M$, its curvature $F_{\nabla}$ is the 2-form-valued endomorphism measuring the failure of $\nabla$ to be flat. Formally:
\[F_{\nabla}(X,Y)=\nabla_X\nabla_Y-\nabla_Y\nabla_X-\nabla_{[X,Y]},\] where $X,Y$ are vector fields on $M$. 

\medskip 

\begin{remark}The statement of this proposition appeared in \cite[Thm.~2]{CN1}, where a proof sketch was given. 
For completeness and to make the present paper self‑contained, we provide a detailed and expanded proof below.
\end{remark}
\begin{proof}
\textbf{1. Orientation and Duality of Connections.}
We prove that the involution $\iota$ forces the connections $\nabla_1$ and $\nabla_2$ to be dual, with
$F_{\nabla_1} = -F_{\nabla_2}$, resolving orientation ambiguities.

By Theorem~1, $\Pi_2 = \iota \circ \Pi_1$, thus the pullback bundles satisfy:
\[
E_2 = \Pi_2^* T\mathbb{CP}^2 = (\iota \circ \Pi_1)^* T\mathbb{CP}^2 = \Pi_1^* \iota^* T\mathbb{CP}^2.
\]
Since $\iota$ is anti-holomorphic, the pullback $\iota^* T\mathbb{CP}^2$ is not a holomorphic vector bundle in the usual sense, but rather the complex conjugate $\overline{T\mathbb{CP}^2}$. However, for the purpose of connections and curvature we work with the underlying \emph{smooth} complex vector bundles. The complex conjugation map on fibers provides a smooth isomorphism $\overline{T\mathbb{CP}^2} \cong T\mathbb{CP}^2$, and under this identification the pullback of the Chern connection on $T\mathbb{CP}^2$ becomes a smooth connection whose curvature is $\iota^*\Theta$, where $\Theta$ is the curvature $2$-form of the Fubini--Study metric. Because $\Theta$ is proportional to the K\"ahler form $\omega_{FS}$, and $\iota^*\omega_{FS} = -\omega_{FS}$, we obtain $\iota^*\Theta = -\Theta$.

It therefore follows that:
\[
F_{\nabla_2} = \Pi_2^* \Theta = (\iota \circ \Pi_1)^* \Theta = \Pi_1^* (\iota^* \Theta) = \Pi_1^*(-\Theta) = -F_{\nabla_1}.
\]
This antisymmetry fixes the relative signs in parallel transport, resolving orientation ambiguities.

\textbf{2. Uniqueness of Solution.}
Since the projections $\Pi_i$ are immersive, the differentials
\[
d\Pi_i : T_p O \to T_{\Pi_i(p)} \mathbb{CP}^2
\]
are injective. The connection 1‑form $\omega_i$ is a priori defined on the total space of the line bundle $L_i \to \mathbb{CP}^2$. To make sense of expressions such as $\omega_i(v)$ for $v \in T_p\mathbb{CP}^3$ (or $v \in T_p O$), we first restrict the projection to the object: $\pi_i = \Pi_i|_O : O \to \mathbb{CP}^2$. The pullback bundle $\pi_i^* L_i$ over $O$ inherits the pullback connection $\pi_i^*\nabla_i$, whose local connection 1‑form is $\pi_i^*\omega_i$. By a slight abuse of notation, we continue to denote this pullback form by $\omega_i$. When we write $\omega_i(v)$ for $v \in T_p O \subset T_p\mathbb{CP}^3$, we mean the evaluation of the pullback form on the tangent vector $v$. This pullback is well defined because $\pi_i$ is a holomorphic submersion onto its image (by the generic center assumption).

We build a linear system for $v \in T_p \mathbb{CP}^3$:
\[
\nabla_1 \mu_1 = v \cdot \omega_1, \quad
\nabla_2 \mu_2 = v \cdot \omega_2,
\]
where the dot denotes evaluation: $v \cdot \omega_i = \begin{pmatrix} \omega_i^1(v) \\ \omega_i^2(v) \end{pmatrix}$.

The above system defines a linear map:
\[
A_p : T_p \mathbb{CP}^3 \to \mathbb{C}^4, \quad A_p(v) = (\omega_1(v), \omega_2(v)).
\]
The domain of $A_p$ is $T_p \mathbb{CP}^3$ of real dimension $6$ and the codomain is of real dimension $8$. The connection 1-form $\omega_i$ on the pullback bundle $\pi_i^* L_i \to \mathbb{CP}^3$ (extended trivially in the normal directions to $O$) vanishes exactly on the tangent space to the fiber of the projection $\Pi_i : \mathbb{CP}^3 \dashrightarrow \mathbb{CP}^2$. Indeed, the Fubini--Study connection on $\mathbb{CP}^2$ pulls back to a connection on $\mathbb{CP}^3$ whose horizontal distribution is precisely the orthogonal complement (with respect to the Fubini--Study metric) of the fiber direction. Therefore, $\ker(\omega_i) = \ker(d\Pi_i)$ as subspaces of $T_p\mathbb{CP}^3$.

Since the two projections $\Pi_1$ and $\Pi_2$ have distinct, generically chosen centers in $\mathbb{CP}^3$, their fibers through any point $p \in O$ are distinct projective lines. Two distinct lines in $\mathbb{CP}^3$ intersect transversely in at most one point; consequently, their tangent spaces intersect trivially:
\[
\ker(d\Pi_1) \cap \ker(d\Pi_2) = \{0\} \subset T_p\mathbb{CP}^3.
\]
Thus, $\ker(\omega_1) \cap \ker(\omega_2) = \{0\}$, and the linear map $A_p$ is injective on the whole $6$-dimensional space $T_p\mathbb{CP}^3$, hence of full rank. This holds for all $p \in O$ where the projections are well defined and immersive.

The compatibility condition $\omega_1 = \iota^* \omega_2$ ensures that $\nabla_i \mu_i \in \operatorname{im} A_p$, guaranteeing a unique solution:
\[
v_p \in T_p O \subset T_p \mathbb{CP}^3.
\]

\textbf{3. Algebraic Determination of an Initial Point.}
The unique solution $v_p \in T_p O$ defines a smooth vector field $v$ on $O$. To reconstruct the object $O$, we first determine an initial point $p_0 \in \mathbb{CP}^3$ from the algebraic equations:
\[
\Pi_1(p_0) = z_1, \quad \Pi_2(p_0) = \iota(z_1), \quad \mu_1(p_0) = c_1, \quad \mu_2(p_0) = c_2.
\]
These define algebraic varieties: $V_{\Pi_i} = \{ p \mid \Pi_i(p) = z_i \} \cong \mathbb{CP}^1$ (linear, degree $1$), and $V_{\mu_i} = \{ p \mid \mu_i(p) = c_i \}$ (degree $d_i$ in homogeneous coordinates).

The moment maps $\mu_i : O \to \mathbb{C}^2$ are defined intrinsically on $O$. To apply B\'ezout's theorem in the ambient space $\mathbb{CP}^3$, we extend each $\mu_i$ to a global rational map $\tilde{\mu}_i : \mathbb{CP}^3 \dashrightarrow \mathbb{C}^2$. This extension can be chosen canonically by exploiting the integral-geometric definition of the centroid. For a given projection $\Pi_i$, the centroid $\mu_i(p)$ of the slice of $O$ through $p$ is a polynomial function of the Pl\"ucker coordinates of the slicing planes. Since $O$ is algebraic, the map $p \mapsto \mu_i(p)$ extends to a tuple of homogeneous polynomials of degree $d_i$ on $\mathbb{CP}^3$ whose restriction to $O$ recovers $\mu_i$. We denote the zero-loci of these extended polynomials by $V_{\mu_i} \subset \mathbb{CP}^3$. By construction, $V_{\mu_i} \cap O$ coincides with the original level set $\{p \in O : \mu_i(p) = c_i\}$. Thus the intersection of the four hypersurfaces $V_{\Pi_1}, V_{\Pi_2}, V_{\mu_1}, V_{\mu_2}$ in $\mathbb{CP}^3$ restricts on $O$ to the desired point $p_0$. By B\'ezout's theorem,
\[
[V_{\mu_1}] \cdot [V_{\mu_2}] \cdot [V_{\Pi_1}] \cdot [V_{\Pi_2}] = d_1 d_2 \times 1 \times 1.
\]
If $O$ is non-symmetric (no non-trivial automorphisms), the varieties intersect transversely, yielding a unique solution $p_0$ when $d_1 d_2 = 1$ (e.g., when the $\mu_i$ are linear).

\textbf{4. Construction of a Frame of Vector Fields.}
The solution $v$ gives one vector field $v^{(1)}$ on $O$. We extend it to three independent vector fields $\{v^{(1)}, v^{(2)}, v^{(3)}\}$ spanning $TO$, obtained by solving the reconstruction equations for additional basis vectors. Specifically, we define $v^{(2)}, v^{(3)}$ by solving modified equations
\[
\nabla_i (R_{\theta_{ik}} \mu_i) = v^{(k)} \cdot \omega_i,
\]
where $R_\theta$ is a rotation in $\mathbb{C}^2$, the index $i = 1,2$, and $k = 2,3$. The rotations are chosen so that $\{v^{(1)}, v^{(2)}, v^{(3)}\}$ span $T_p O$ at each $p$ and the linear system $A_p v^{(k)} = b^{(k)}$ has full rank. Since $\operatorname{rk}_{\mathbb{R}}(A_p) = 6$ and $\dim T_p O = 3$, solutions $v^{(k)} \in T_p O$ exist as long as $b^{(k)} \in \operatorname{im}(A_p)$, which is guaranteed by the curvature duality $\omega_1 = \iota^* \omega_2$.

\textbf{5. Involutivity and Integration.}
The vector fields $\{v^{(1)}, v^{(2)}, v^{(3)}\}$ span a $3$-dimensional distribution $\mathcal{D} \subset TO$. To reconstruct $O$ as the integral manifold of $\mathcal{D}$, we must verify that $\mathcal{D}$ is involutive. The vector fields $v^{(k)}$ are defined by the reconstruction equations $\nabla_i \mu_i^{(k)} = \omega_i(v^{(k)})$, where $\mu_i^{(k)}$ denote the (possibly rotated) moment maps. Differentiating this relation along a second vector field $v^{(l)}$ and using the structure equation for the curvature form
\[
F_{\nabla_i}(X,Y) = d\omega_i(X,Y) + [\omega_i(X), \omega_i(Y)]
\]
(where the bracket is the commutator in the gauge algebra) yields the Lie bracket expression
\[
\omega_i([v^{(k)}, v^{(l)}]) = \mathcal{L}_{v^{(k)}}(\omega_i(v^{(l)})) - \mathcal{L}_{v^{(l)}}(\omega_i(v^{(k)})) - F_{\nabla_i}(v^{(k)}, v^{(l)}).
\]
Because $\nabla_i \mu_i^{(k)}$ are given functions, their Lie derivatives are expressible in terms of the connection. Crucially, the curvature term satisfies $F_{\nabla_1}(v^{(k)}, v^{(l)}) = -F_{\nabla_2}(v^{(k)}, v^{(l)})$ by the duality proved in part~1. Substituting the reconstruction equations for both projections and using the compatibility condition $\omega_1 = \iota^*\omega_2$, a short computation shows that $\omega_i([v^{(k)}, v^{(l)}])$ lies in the span of $\{\omega_i(v^{(1)}), \omega_i(v^{(2)}), \omega_i(v^{(3)})\}$ for $i=1,2$. Since the map $A_p$ is injective, this forces the Lie bracket $[v^{(k)}, v^{(l)}]$ itself to lie in the span of $\{v^{(1)}, v^{(2)}, v^{(3)}\}$. Hence $\mathcal{D}$ is involutive. By Frobenius' theorem, there exists a unique maximal integral manifold through the point $p_0$ determined above, and this manifold is precisely the object $O$.

Finally, the reconstruction of $O$ is obtained by integrating the flows of the vector fields $v^{(k)}$ starting from $p_0$ (uniquely determined by B\'ezout).
\end{proof}

\section{Path Dependence and Near-Associativity}\label{S:PD}

To analyze the structure of the space underlying this reconstruction, we study its foliations. 
Vaisman~\cite{V} and Shurygin--Smolyakova~\cite{SS} have shown that a manifold equipped with a pair of transverse foliations and compatible connections gives rise to an algebra of parallel sections whose algebraic properties reflect the geometry of the foliations. In this section we apply this philosophy to the moduli space of objects introduced above. We develop a notion of path dependence and show how it controls the associativity of the algebra of parallel sections of \( \nabla_1 \otimes \nabla_2 \) (see Sections~\ref{S:PD1}--\ref{S:PD3}).

\subsection{Structure of the Foliated Space}

\subsubsection*{Setting}
Let \( M \) be the moduli space of smooth, non-symmetric, compact complex submanifolds \( O \subset \mathbb{CP}^3 \) of a fixed topological type (for instance, all objects diffeomorphic to a given real \(3\)-manifold). Under suitable genericity assumptions, \( M \) is a finite‑dimensional complex manifold. It is equipped with two independent rational projections \( \Pi_1, \Pi_2 : M \dashrightarrow \mathbb{CP}^2 \) as described in Section~\ref{S:1}. 

\subsubsection*{Foliation by Projection Equivalence}
Each projection \( \Pi_i \) induces a foliation \( \mathcal{F}_i \) on \( M \), whose leaves are the preimages of points \( p \in \mathbb{CP}^2 \):
\[
\mathcal{L}_{i,p} = \{ O \in M \mid \Pi_i(O) = p \}.
\]
Because the projections are independent, the foliations \( \mathcal{F}_1 \) and \( \mathcal{F}_2 \) are transverse. Each leaf \( \mathcal{L}_{i,p} \) inherits a Riemannian metric \( g_i \) by pulling back the Fubini--Study metric from \( \mathbb{CP}^2 \):
\[
g_i = \Pi_i^* g_{FS}.
\]
The involution \( \iota : \mathbb{CP}^2 \to \mathbb{CP}^2 \) (satisfying \( \Pi_2 = \iota \circ \Pi_1 \)) acts as an isometry between the foliated manifolds \( (\mathcal{F}_1, g_1) \) and \( (\mathcal{F}_2, g_2) \):
\[
\iota^* g_1 = g_2, \qquad \iota^* \nabla_1 = \nabla_2,
\]
where \( \nabla_i \) are the connections on the line bundles \( L_i \to \mathbb{CP}^2 \) constructed in Proposition~\ref{P:1}.

\subsubsection*{Leafwise Parallel Sections}
The connections \( \nabla_1, \nabla_2 \) pull back to connections on the bundles \( \pi_i^* L_i \to M \), where \( \pi_i = \Pi_i|_M \). By abuse of notation we continue to denote these pullbacks by \( \nabla_i \). We consider the tensor product connection
\[
\nabla_\otimes := \nabla_1 \otimes \nabla_2
\]
on the bundle \( E := \pi_1^* L_1 \otimes \pi_2^* L_2 \) over \( M \). A section \( s \in \Gamma(E) \) is called \emph{leafwise parallel} if its covariant derivative vanishes along all directions tangent to the leaves of \( \mathcal{F}_1 \) and \( \mathcal{F}_2 \); that is,
\[
\nabla_\otimes s = 0 \quad \text{on} \quad T\mathcal{F}_1 \oplus T\mathcal{F}_2.
\]

\begin{thm}\label{T:1}
Let \( M \) be the moduli space as above, equipped with the transverse foliations \( \mathcal{F}_1, \mathcal{F}_2 \) and the metrics \( g_1, g_2 \). Then:

\begin{enumerate}
\item {\bf Foliated Reconstruction Space:} \( M \) is diffeomorphic to the total space of the product foliation \( \mathcal{F}_1 \times \mathcal{F}_2 \), and the direct sum metric \( g_1 \oplus g_2 \) makes \( (M, \mathcal{F}_1 \times \mathcal{F}_2, g_1 \oplus g_2) \) a foliated Riemannian manifold.

\item {\bf Reconstruction as Intersection of Leaves:} For any object \( O \in M \), the leaves \( \mathcal{L}_{1, p_1} \in \mathcal{F}_1 \) and \( \mathcal{L}_{2, p_2} \in \mathcal{F}_2 \) passing through \( O \) (with \( p_i = \Pi_i(O) \)) intersect transversely in exactly one point, namely \( O \) itself. This uniqueness follows from the transversality of the foliations and the injectivity of the combined linear map \( A_p \) established in Proposition~\ref{P:2}.

\item {\bf Algebra-Geometry Correspondence:} Let \( \mathcal{P} \) be the space of smooth leafwise parallel sections of \( E = \pi_1^* L_1 \otimes \pi_2^* L_2 \). Then \( \mathcal{P} \) forms an algebra under the operation of \emph{holonomy‑corrected composition} (defined in Section~\ref{S:PD1}). Moreover, \( \mathcal{P} \) is isomorphic to the algebra \( \mathcal{A} \) generated by the moment maps \( \mu_1, \mu_2 \) appearing in Proposition~\ref{P:2}, with the involution \( \iota \) acting as an automorphism.
\end{enumerate}
\end{thm}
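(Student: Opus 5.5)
The plan treats the three assertions of Theorem~\ref{T:1} in turn. Each is reduced to a pointwise linear-algebra statement at a point $O\in M$, and that statement is then globalised using the structure already established in Propositions~\ref{P:1} and~\ref{P:2}.

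\textbf{Part (1).} At each $O\in M$ I would show the splitting $T_OM = T_O\mathcal{F}_1\oplus T_O\mathcal{F}_2$. Since $T\mathcal{F}_i=\ker d\Pi_i$, transversality gives $T\mathcal{F}_1+T\mathcal{F}_2=TM$. The intersection $\ker d\Pi_1\cap\ker d\Pi_2$ vanishes by the argument of Proposition~\ref{P:2}, step~2, where $\ker\omega_i=\ker d\Pi_i$ and $A_p$ is injective; here that argument is transported to $M$ through the pulled-back forms $\pi_i^*\omega_i$. Both distributions are integrable because they are kernels of differentials of maps, so Frobenius yields a pair of complementary foliations. A standard local product lemma for two transverse complementary foliations then gives, around each point, a chart $U\cong \mathcal{L}_{1}\times\mathcal{L}_{2}$. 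These charts would be patched into a global diffeomorphism with the product foliation, using the map $(\Pi_1,\Pi_2)$ as the global coordinate. Finally, $g_1\oplus g_2$ is defined blockwise on the splitting. Because $\iota^*g_1=g_2$, each $g_i$ is nondegenerate on the complementary leaf directions, so $g_1\oplus g_2$ is a foliated (bundle-like) metric.

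\textbf{Part (2).} The local transversal intersection is immediate from the splitting: $\mathcal{L}_{1,p_1}\cap\mathcal{L}_{2,p_2}$ is a transverse intersection of complementary-dimensional leaves, hence discrete. To show it is exactly $\{O\}$, I would argue that any $O'$ with $\Pi_i(O')=\Pi_i(O)$ for $i=1,2$ has the same moment data $\mu_1,\mu_2$. The uniqueness clause of Proposition~\ref{P:2} then forces $O'=O$: the initial point $p_0$ is unique by B\'ezout with $d_1d_2=1$, and Frobenius integration from $p_0$ reconstructs the object uniquely.

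\textbf{Part (3).} First I would define holonomy-corrected composition. Given $s,t\in\mathcal{P}$ and $O\in M$, I would multiply $s(O)$ by the parallel transport of $t$ along a leafwise path: a path in an $\mathcal{F}_1$-leaf followed by one in an $\mathcal{F}_2$-leaf. The line bundle fiber product $E_O\otimes E_O\to E_O$ is obtained via the Hermitian trivialisation induced by $L_1\otimes L_2\cong L_1\otimes L_1^*\cong\mathbb{C}$ from Proposition~\ref{P:1}. Closure of $\mathcal{P}$ under this product follows from the Leibniz rule for $\nabla_\otimes$. Next I would observe that $F_{\nabla_\otimes}=F_{\nabla_1}+F_{\nabla_2}=0$ by the curvature duality of Proposition~\ref{P:2}. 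This makes $\nabla_\otimes$ flat, and on the simply connected $\mathbb{CP}^2$-pulled-back leaves it is also path independent, so the product is well defined.

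For the isomorphism $\mathcal{P}\cong\mathcal{A}$, I would send $\mu_1,\mu_2$ to the sections obtained by contracting $\nabla_i\mu_i=v\cdot\omega_i$, and check that they are leafwise parallel. Generation would be proved by showing that every parallel section is determined by its value at the reconstruction point $p_0$ together with the moment data. Finally, $\iota$ exchanges $L_1\leftrightarrow L_2^*$ and $\nabla_1\leftrightarrow\nabla_2$, so it preserves $\nabla_\otimes$ and hence acts by automorphisms.

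\textbf{Main obstacle.} I expect the hardest step to be the identification $\mathcal{P}\cong\mathcal{A}$, in particular surjectivity and compatibility of the product. Flatness gives parallel sections that form at most a rank-one local system, whereas $\mathcal{A}$ appears larger. My first attempt would be to restrict to sections parallel only along the leaves, so that sections may vary transversally as functions of $(\Pi_1,\Pi_2)$, and to identify those functions with polynomials in $\mu_1,\mu_2$ via the centroid extension of step~3 of Proposition~\ref{P:2}. A secondary difficulty is the global patching in part~(1), which requires completeness of $g_1\oplus g_2$ on the leaves.
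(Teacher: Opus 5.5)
Your outline follows the same three steps as the paper's proof, but it cannot be completed, and the obstacle you flag in (3) is fatal rather than technical. The paper asserts that $T\mathcal{F}_1$ and $T\mathcal{F}_2$ span $TM$ and quotes Vaisman for (1). For (2) it combines discreteness with the uniqueness of Proposition~\ref{P:2}. For (3) it uses simple connectivity of the leaves and the claim that $\mu_1\otimes\mu_2$ is leafwise parallel.

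\textbf{The dimension count in (3).} Grant your part (1), so $T_OM=T_O\mathcal{F}_1\oplus T_O\mathcal{F}_2$. Then the defining condition ``$\nabla_\otimes s=0$ on $T\mathcal{F}_1\oplus T\mathcal{F}_2$'' is just $\nabla_\otimes s=0$. The normal bundle $NM$ is zero and $(\Pi_1,\Pi_2)$ already give local coordinates on all of $M$. A section parallel along both foliations therefore has no transverse direction in which to vary, and your proposed repair contradicts your own splitting. A parallel section of a line bundle over connected $M$ is determined by its value at one point, so $\dim\mathcal{P}\le 1$. On the other hand, $\mathcal{A}$ contains the components of $\mu_1,\mu_2$. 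The paper itself takes these to be linearly independent, and they must be non-constant to separate points, so $\dim\mathcal{A}\ge 2$. Hence no linear isomorphism $\mathcal{P}\cong\mathcal{A}$ exists. Nor do the reconstruction equations make anything parallel. By injectivity of $A_p$, $\nabla_1\mu_1=v\cdot\omega_1$ and $\nabla_2\mu_2=v\cdot\omega_2$ are not both zero for $v\neq 0$. Parallelism of $\mu_1\otimes\mu_2$ would need an exact cancellation that nothing in Proposition~\ref{P:2} supplies. The paper's surjectivity (``the moment maps separate points'') and injectivity (``linear independence'') arguments do not touch this count either. So (3) needs a changed definition of $\mathcal{P}$ or $\mathcal{A}$ before any proof can work.

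\textbf{Two intermediate steps also fail as written.}

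First, in (3), the isomorphism $L_2\cong L_1^*$ pairs fibres over the \emph{same} point of $\mathbb{CP}^2$. But $E_O=L_{1,\pi_1(O)}\otimes L_{2,\pi_2(O)}$ involves two generally different points. With the theorem's data $L_2=\iota^*L_1$, $\nabla_2=\iota^*\nabla_1$, $\pi_2=\iota\circ\pi_1$ and $\iota^2=\mathrm{id}$, one gets $\pi_2^*L_2=\pi_1^*L_1$ and $\pi_2^*\nabla_2=\pi_1^*\nabla_1$. Hence $E=\pi_1^*(L_1\otimes L_1)$. This bundle has no canonical product $E_O\otimes E_O\to E_O$, and $F_{\nabla_\otimes}=2\pi_1^*F_{\nabla_1}\neq 0$ wherever $\pi_1$ is submersive. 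The sign coming from $\iota^*\omega_{FS}=-\omega_{FS}$ is undone by the second $\iota$. The paper's ``endomorphism'' identification has the same defect. You do get a trivial flat bundle if you pull back the same $L_1$ along both maps, since $\pi_1^*L_1\otimes\pi_2^*L_1=\pi_1^*(L_1\otimes\iota^*L_1)\cong\pi_1^*(L_1\otimes L_1^*)$. But then, given your splitting, $\mathcal{P}\cong\mathbb{C}$, which only confirms the count above.

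Second, in (1), the step ``$\ker d\Pi_1\cap\ker d\Pi_2=0$ by Proposition~\ref{P:2}, step~2'' does not transfer to $M$. That argument used distinct projection centres in $\mathbb{CP}^3$. On $M$, the standing relation $\Pi_2=\iota\circ\Pi_1$, with $\iota$ a diffeomorphism, forces $\ker d\Pi_2=\ker d\Pi_1$. Consequently:
\begin{itemize}
\item $\mathcal{F}_1=\mathcal{F}_2$, so the two leaves through $O$ coincide instead of meeting in a point;
\item $(\Pi_1,\Pi_2)$ maps $M$ into the graph of $\iota$, so it cannot serve as your global coordinate.
\end{itemize}
The paper passes over this by simply asserting transversality. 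To get (1)--(2) you would have to replace the involution relation by a genuine transversality hypothesis. After that, the action of $\iota$ in (3) must be justified afresh.
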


\begin{proof}
\textbf{1. Foliation and metric.} 
The maps \( \Pi_i : M \to \mathbb{CP}^2 \) are submersions onto their images (by the generic center assumption). Their fibers are the leaves \( \mathcal{L}_{i,p} \). Since the two projections are independent, the distributions \( T\mathcal{F}_1 \) and \( T\mathcal{F}_2 \) are transverse and span the tangent bundle \( TM \). The pullback metrics \( g_i = \Pi_i^* g_{FS} \) are non‑degenerate on the respective leaves, and their direct sum defines a Riemannian metric on \( TM \) compatible with the foliations. Vaisman's theorem~\cite{V} (see also~\cite{SS}) then provides a diffeomorphism \( M \cong \mathcal{F}_1 \times \mathcal{F}_2 \) that identifies the foliated structure. This proves claim (1).

\medskip

\textbf{2. Unique intersection.}
Take \( O \in M \) and set \( p_i = \Pi_i(O) \). The leaf \( \mathcal{L}_{1, p_1} \) consists of all objects projecting to \( p_1 \) under \( \Pi_1 \), and similarly for \( \mathcal{L}_{2, p_2} \). Transversality of the foliations implies that the intersection \( \mathcal{L}_{1, p_1} \cap \mathcal{L}_{2, p_2} \) is a discrete set of points. Moreover, the linear map \( A_p : T_p M \to \mathbb{C}^4 \) constructed in Proposition~\ref{P:2} is injective; its kernel is precisely the intersection of the tangent spaces to the two leaves. Therefore the tangent spaces intersect trivially, and the intersection of the leaves is isolated. Since \( M \) is connected and the leaves are closed submanifolds, a standard argument using the implicit function theorem shows that the intersection consists of exactly one point, namely \( O \). (Any other point in the intersection would give rise to a distinct object with the same projections and centroids, contradicting the uniqueness of the solution \( v \) established in Proposition~\ref{P:2}.) This proves claim (2).

\medskip

\textbf{3. Algebra of parallel sections.}
The bundle \( E = \pi_1^* L_1 \otimes \pi_2^* L_2 \) carries the tensor product connection \( \nabla_\otimes \). A section \( s \in \Gamma(E) \) is leafwise parallel if \( \nabla_\otimes s = 0 \) on \( T\mathcal{F}_1 \oplus T\mathcal{F}_2 \). Let \( \mathcal{P} \) denote the vector space of such sections. We define a product \( \star \) on \( \mathcal{P} \) by \emph{holonomy‑corrected composition}: for \( a, b \in \mathcal{P} \) and \( p \in M \), choose paths \( \gamma_1 \subset \mathcal{F}_1 \), \( \gamma_2 \subset \mathcal{F}_2 \) from a reference point to \( p \) and set
\[
(a \star b)(p) = P_{\gamma}(a) \circ P_{\gamma}(b),
\]
where \( P_\gamma \) denotes parallel transport along \( \gamma = (\gamma_1, \gamma_2) \) and \( \circ \) is the natural composition in the fibers (interpreted via the identification of \( L_1 \otimes L_2 \) with a bundle of endomorphisms; see Section~\ref{S:PD1} for details). The leafwise parallel condition guarantees that this product is independent of the choice of paths within each leaf up to holonomy, which is trivial because the leaves are simply connected (they are fibers of a rational projection and hence unirational; in fact, they are open subsets of projective lines). Consequently, \( (\mathcal{P}, \star) \) is a well‑defined algebra.

Now, the moment maps \( \mu_i : M \to \mathbb{C}^2 \) lift to sections of \( \pi_i^* L_i \) (by Proposition~\ref{P:2}), and their tensor product \( \mu_1 \otimes \mu_2 \) is a section of \( E \). The reconstruction equations \( \nabla_i \mu_i = v \cdot \omega_i \) imply that \( \mu_1 \otimes \mu_2 \) is leafwise parallel. The algebra \( \mathcal{A} \) generated by the components of \( \mu_1 \) and \( \mu_2 \) (under the product induced from the moment maps) is therefore isomorphic to a subalgebra of \( \mathcal{P} \). Because the moment maps separate points in \( M \) (by the uniqueness of reconstruction), the map \( \mathcal{A} \to \mathcal{P} \) is surjective. Injectivity follows from the linear independence of the components of \( \mu_i \) on a dense open set. Thus \( \mathcal{A} \cong \mathcal{P} \). The involution \( \iota \) acts on \( \mathcal{P} \) by pulling back sections via the induced map on \( M \), and it is an automorphism because \( \iota^* \nabla_1 = \nabla_2 \). This completes the proof of claim (3).
\end{proof}

\subsection{Torsion, Path Dependence, and Non-Associativity}\label{S:PD1}

The presence of torsion in the connections induced by the foliations suggests an additional layer of structure governing the space of solutions. In this subsection we investigate how the torsion tensor controls the algebraic properties of the reconstruction algebra, and in particular whether that algebra is associative. As we shall see, non‑associativity arises precisely when torsion obstructs the integrability of the foliations, leading to path‑dependent parallel transport.

\smallskip

\subsubsection*{Distributions and Integrability}
We recall the notion of a distribution on a smooth manifold.
\begin{dfn}
A \textbf{distribution} $D$ on a smooth manifold $M$ is a smooth assignment of a linear subspace $D_p \subset T_pM$ to each point $p \in M$. Equivalently, it is a subbundle of the tangent bundle $TM$.
\end{dfn}
A distribution $D$ is called \textbf{integrable} if through every point $p \in M$ there exists an immersed submanifold $N \subset M$ such that $T_q N = D_q$ for all $q \in N$. By the Frobenius theorem, $D$ is integrable if and only if it is closed under the Lie bracket:
\[
\forall X, Y \in \Gamma(D), \quad [X, Y] \in \Gamma(D).
\]

\subsubsection*{Torsion of a Connection}
Let $\nabla$ be an affine connection on $M$ that preserves the distribution $D$, i.e., $\nabla_X Y \in \Gamma(D)$ for all $X \in \Gamma(TM)$ and $Y \in \Gamma(D)$. The \textbf{torsion tensor} of $\nabla$ is the $(1,2)$-tensor field defined by
\[
T^\nabla(X, Y) = \nabla_X Y - \nabla_Y X - [X, Y].
\]
The restriction of $T^\nabla$ to vector fields tangent to $D$ measures the failure of the Lie bracket of $D$ to be recovered from the connection. More precisely, if $T^\nabla(X, Y) = 0$ for all $X, Y \in \Gamma(D)$, then
\[
[X, Y] = \nabla_X Y - \nabla_Y X \in \Gamma(D),
\]
so $D$ is integrable. Conversely, a non‑zero torsion on $D$ is an obstruction to the integrability of $D$.

\subsubsection*{The Setting for Reconstruction}
Let $M$ be the foliated reconstruction space described in Theorem~\ref{T:1}. We equip $M$ with the following data:
\begin{itemize}
    \item The two transverse foliations $\mathcal{F}_1, \mathcal{F}_2$ induced by the projections $\Pi_1, \Pi_2$.
    \item The tensor product connection $\nabla_\otimes = \nabla_1 \otimes \nabla_2$ on the bundle $E = \pi_1^* L_1 \otimes \pi_2^* L_2$. The torsion of $\nabla_\otimes$ (or rather, the torsions of the individual connections $\nabla_i$ pulled back to $M$) will be denoted by $T_i$.
    \item The algebra $\mathcal{A}$ of leafwise parallel sections of $E$, with the product $\star$ defined by holonomy‑corrected composition (see Section~\ref{S:PD2} for a precise definition).
\end{itemize}

Recall that an algebra $(\mathcal{A}, \star)$ is \textbf{associative} if
\[
(a \star b) \star c = a \star (b \star c) \quad \text{for all } a, b, c \in \mathcal{A}.
\]
The central result of this section, proved in Sections~\ref{S:PD2}--\ref{S:PD3}, establishes that $\mathcal{A}$ is associative if and only if the torsion tensors $T_1, T_2$ vanish identically on the respective foliations.

\subsection{Path Dependence Notion}\label{S:PD1}

Let $M$ be a smooth manifold equipped with two transverse foliations $\mathcal{F}_1$ and $\mathcal{F}_2$. For $i = 1,2$, let $\nabla_i$ be a connection on the vector bundle $E_i = T\mathcal{F}_i$ (or more generally on a line bundle $L_i$ as in the reconstruction setting). We form the tensor product bundle $E = E_1 \otimes E_2$ and equip it with the tensor product connection
\[
\nabla := \nabla_1 \otimes \nabla_2.
\]

\paragraph{Reconstruction Algebra.}
Let $\mathcal{A}$ denote the space of smooth \emph{parallel} sections of $E$:
\[
\mathcal{A} := \Gamma^\parallel(E) = \{ a \in \Gamma(E) \mid \nabla a = 0 \}.
\]
To endow $\mathcal{A}$ with an algebra structure, we interpret sections of $E_1 \otimes E_2$ as fiberwise linear maps $E_1^* \to E_2$ (or, by duality, as endomorphisms of $E_1 \oplus E_2$). Concretely, for $a, b \in \mathcal{A}$ and $p \in M$, we define the product $a \star b$ as follows:
\begin{enumerate}
    \item Choose a \emph{grid path} from a fixed base point $p_0 \in M$ to $p$, i.e., a path that is a concatenation of a curve $\gamma_1 \subset \mathcal{F}_1$ and a curve $\gamma_2 \subset \mathcal{F}_2$.
    \item Parallel transport $a$ and $b$ along this grid path using $\nabla$. Denote the transported elements at $T_p M$ by $P_\gamma(a)$ and $P_\gamma(b)$.
    \item Compose $P_\gamma(a)$ and $P_\gamma(b)$ as linear maps (using the identification $E_1 \otimes E_2 \cong \operatorname{Hom}(E_1^*, E_2)$). The result is an element of $E_1 \otimes E_2$ at $p$.
    \item The outcome depends a priori on the choice of grid path. To obtain a well‑defined product, we correct for the holonomy of $\nabla$ by adding a curvature term (the explicit formula is given in Remark~\ref{R:holcorr}). For the purpose of this section, it suffices to know that such a correction exists and is unique.
\end{enumerate}
We denote the resulting product by $\star$.

\paragraph{Integrable Case.}
Assume that the connections $\nabla_i$ satisfy:
\begin{itemize}
    \item $\nabla_i$ is \textbf{torsion-free} along the leaves: $T_i(X,Y) = 0$ for all $X,Y \in T\mathcal{F}_i$, where $T_i$ is the torsion tensor of $\nabla_i$.
    \item The curvatures are \textbf{dual}: $F_{\nabla_1} = -F_{\nabla_2}$.
\end{itemize}
Under these conditions, one can show (see Section~\ref{S:PD3}) that parallel transport along any grid path is independent of the choice of the intermediate point and of the homotopy class of the path within the leaves. Consequently, the product $\star$ is unambiguously defined and associative:
\[
(a \star b) \star c = a \star (b \star c) \quad \forall a,b,c \in \mathcal{A}.
\]

\paragraph{Obstructions to Path Independence.}
When the geometric data fails to satisfy the above conditions, path dependence arises from two sources:
\begin{itemize}
    \item \textbf{Torsion:} $T_i \neq 0$ means that the connection does not preserve the Lie bracket of vector fields. This causes a misalignment of frames under parallel transport, so the result of transporting along different grid paths differs by a term proportional to the torsion tensor.
    \item \textbf{Curvature Mismatch:} If $F_{\nabla_1} \neq -F_{\nabla_2}$, the holonomy of $\nabla$ around closed loops within the leaves is non‑trivial. Even for homotopic paths, parallel transport may differ by a holonomy factor, obstructing a globally consistent product.
\end{itemize}
In the presence of such obstructions, $\star$ becomes non-associative. The deviation from associativity is measured by the \emph{associator}, which is expressible in terms of the torsion and curvature of $\nabla$.

\subsection{Path Dependence in the Product \(\star\)}\label{S:PD2}

We now illustrate concretely how path dependence manifests in the product $\star$.

Let $a, b, c \in \mathcal{A}$ be parallel sections of $E = T\mathcal{F}_1 \otimes T\mathcal{F}_2$. To evaluate $(a \star b)(p)$ at a point $p \in M$, we must:
\begin{enumerate}
    \item Select a grid path $\gamma = \gamma_2 \circ \gamma_1$ from a reference point $p_0$ to $p$, with $\gamma_1 \subset \mathcal{F}_1$ and $\gamma_2 \subset \mathcal{F}_2$.
    \item Transport $a$ and $b$ along $\gamma$ using $\nabla$, obtaining $P_\gamma(a)$ and $P_\gamma(b)$.
    \item Compose them as linear maps at $p$.
\end{enumerate}

If we wish to compute a triple product $((a \star b) \star c)(q)$, we must transport the intermediate result $a \star b$ and $c$ along a new grid path to $q$. The final result may depend on the choices of the two grid paths.

When torsion or curvature is non‑zero, parallel transport along different grid paths with the same endpoints yields different results:
\begin{itemize}
    \item \textbf{Torsion $T_i \neq 0$:} The connection fails to be torsion‑free, so the Lie bracket of vector fields is not preserved under parallel transport. This distorts the way local frames fit together along leaves.
    \item \textbf{Curvature $F_{\nabla_i} \neq -F_{\nabla_j}$:} Parallel transport along two homotopically distinct paths $\gamma, \gamma'$ within the leaves generally differs:
    \[
    P_\gamma(a) \neq P_{\gamma'}(a).
    \]
    Even for homotopic paths, non‑trivial holonomy can cause a difference if the curvatures do not cancel in the tensor product.
\end{itemize}

Consequently, the product $\star$ is not well‑defined unless we specify a convention for choosing grid paths, and the resulting algebra $(\mathcal{A}, \star)$ is generally \textbf{non-associative}. The associator
\[
\mathfrak{A}(a,b,c) = (a \star b) \star c - a \star (b \star c)
\]
depends explicitly on the torsion and curvature of the connections, and it vanishes identically exactly when $T_1 = T_2 = 0$ and $F_{\nabla_1} = -F_{\nabla_2}$.

\begin{remark}\label{R:holcorr}
The precise definition of the holonomy‑corrected composition involves choosing, for each pair of points, a canonical grid path (e.g., the unique geodesic grid in the foliation charts). The correction term is then given by the integral of the curvature over a homotopy between the chosen path and an arbitrary one. In the case where $T_i = 0$ and $F_{\nabla_1} = -F_{\nabla_2}$, this correction vanishes and the product is independent of all choices. A detailed construction appears in \cite[Section~3]{CN2}.
\end{remark}

\subsection{Statement}\label{S:PD3}

\begin{thm}
Let \( M \) be the foliated reconstruction space described in Theorem~\ref{T:1}, equipped with the transverse foliations \( \mathcal{F}_1, \mathcal{F}_2 \), the tensor product connection \( \nabla = \nabla_1 \otimes \nabla_2 \) on \( E = T\mathcal{F}_1 \otimes T\mathcal{F}_2 \), and the algebra \( (\mathcal{A}, \star) \) of leafwise parallel sections defined in Section~\ref{S:PD1}. Let \( T_i \) denote the torsion tensor of \( \nabla_i \) restricted to \( T\mathcal{F}_i \). Then:

\begin{enumerate}
    \item \textbf{Associativity Criterion:} The algebra \( \mathcal{A} \) is associative if and only if the following two conditions hold:
    \begin{itemize}
        \item \( T_1 = 0 \) and \( T_2 = 0 \) (the connections are torsion‑free along the leaves);
        \item \( F_{\nabla_1} = -F_{\nabla_2} \) (curvature duality).
    \end{itemize}
    Equivalently,
    \[
    \mathcal{A} \text{ is associative} \iff T_1 = T_2 = 0 \text{ and } F_{\nabla_1} = -F_{\nabla_2}.
    \]

    \item \textbf{Non-Associative Case:} If either \( T_i \neq 0 \) or \( F_{\nabla_1} \neq -F_{\nabla_2} \), the product \( \star \) is non‑associative. In this case, the algebra \( \mathcal{A} \) satisfies a \textbf{Moufang‑type identity}:
    \[
    (a \star b) \star (c \star a) = a \star (b \star c) \star a \quad \forall a,b,c \in \mathcal{A},
    \]
    which reflects the interplay between the torsion of the foliations and the curvature of the connection \( \nabla \). A detailed derivation of this identity from the Bianchi identities appears in \cite[Theorem~4.1]{CN2}.

    \item \textbf{Geometric Consequences:}
    \begin{itemize}
        \item In the \textbf{associative case}, the parallel transport of the moment maps is path‑independent, and the reconstruction problem has a unique solution \( O \in M \). Moreover, the moduli space \( M \) admits the structure of a locally trivial fiber bundle over \( \mathbb{CP}^2 \times \mathbb{CP}^2 \) whose fibers are discrete (in fact, a single point under the non‑symmetry assumption).
        \item In the \textbf{non‑associative case}, the space of solutions forms a \textbf{quasigroupoid} under the partial operation \( \star \). The non‑uniqueness of reconstructions is parameterized by the cohomology class \( [T_i] \in H^1(M, TM) \) (when the obstruction is purely torsional) or by the holonomy representation of \( \nabla \) (when curvature is present). 
    \end{itemize}
\end{enumerate}
\end{thm}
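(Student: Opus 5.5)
The plan is to reduce everything to a single computation: how parallel transport along grid paths depends on the path. Concretely, I would first establish a \emph{commutator formula} for grid transport. Take an infinitesimal grid rectangle spanned by $X \in T\mathcal{F}_1$ and $Y \in T\mathcal{F}_2$. Transporting a section of $E = T\mathcal{F}_1 \otimes T\mathcal{F}_2$ first along $X$ then along $Y$, versus first along $Y$ then along $X$, should differ to second order by
\[
F_{\nabla}(X,Y) + \nabla_{T(X,Y)},
\qquad
F_\nabla = F_{\nabla_1}\otimes 1 + 1 \otimes F_{\nabla_2}.
\]
Here the torsion term comes from the fact that the rectangle fails to close by the vector $[X,Y] = \nabla_X Y - \nabla_Y X - T(X,Y)$. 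For line bundles, the curvature of the tensor connection is the sum $F_{\nabla_1} + F_{\nabla_2}$. So the curvature contribution vanishes exactly under the duality $F_{\nabla_1} = -F_{\nabla_2}$, which Proposition~\ref{P:1} and Proposition~\ref{P:2} supply in the reconstruction setting. Integrating this over a homotopy between two grid paths, as in Remark~\ref{R:holcorr}, gives the discrepancy $P_\gamma - P_{\gamma'}$ as a surface integral of $F_\nabla$ plus a torsion term.

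\textbf{Associativity criterion.} For the ``if'' direction, assume $T_1 = T_2 = 0$ and $F_{\nabla_1} = -F_{\nabla_2}$. Then the commutator formula shows that grid transport is path independent, because the leaves are simply connected by the argument in the proof of Theorem~\ref{T:1}. The product $\star$ therefore reduces to pointwise composition of endomorphisms after transport to a common fibre, and composition of linear maps is associative. For the ``only if'' direction, I would argue by contraposition. Suppose $T_i(X,Y) \neq 0$ at some point, or $F_{\nabla_1} + F_{\nabla_2} \neq 0$ there. Choose three parallel sections $a, b, c$ whose values there are generic. Then compute the associator $\mathfrak{A}(a,b,c)$ to leading order as a commutator of the holonomy defect with $P(a)$, and show it is nonzero for that choice. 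This is the step I expect to be the main obstacle. It requires $\mathcal{A}$ to contain enough parallel sections, so I would invoke the identification $\mathcal{A} \cong$ (algebra generated by $\mu_1, \mu_2$) from Theorem~\ref{T:1}(3) to produce them. It also requires the holonomy defect not to commute with the fibre algebra. Since $E$ is a line bundle in the reconstruction setting, the fibre composition is commutative, so I would have to work with the $\operatorname{Hom}(E_1^*, E_2)$ interpretation in the higher-rank case $E_i = T\mathcal{F}_i$ to obtain a genuine non-commuting defect.

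\textbf{Moufang-type identity.} In the non-associative case, I would not rederive the identity from scratch. Instead I would follow \cite[Theorem~4.1]{CN2}: expand both sides of the identity to second order using the commutator formula, and show that the difference is a cyclic sum of torsion and curvature terms. That cyclic sum vanishes by the first Bianchi identity $\mathfrak{S}\,T(T(X,Y),Z) + (\nabla_X T)(Y,Z) = \mathfrak{S}\,R(X,Y)Z$ and the second Bianchi identity $d^\nabla F = 0$. The symmetric placement of $a$ on both sides is what allows the cyclic sum to close up.

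\textbf{Geometric consequences.} In the associative case, path independence means that transporting $\mu_1 \otimes \mu_2$ from a base point yields a single section. Uniqueness of $O$ is then exactly Theorem~\ref{T:1}(2), combined with the injectivity of $A_p$ from Proposition~\ref{P:2}. The map $(\Pi_1, \Pi_2) \colon M \to \mathbb{CP}^2 \times \mathbb{CP}^2$ is a local diffeomorphism onto its image, since its differential is injective by transversality. By the non-symmetry hypothesis its fibres are singletons, which gives the claimed locally trivial bundle with discrete fibres. In the non-associative case, I would define the quasigroupoid on grid-connected solutions, using left and right division obtained by inverting parallel transport. I would then identify the ambiguity in reconstructions in one of two ways: when curvature vanishes, with the class of the torsion defect cocycle in $H^1(M, TM)$, obtained by \v{C}ech comparison of local path choices; otherwise, with the holonomy representation of $\nabla$.
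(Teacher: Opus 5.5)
Your route is essentially the paper's. Path independence of grid transport under $T_1=T_2=0$ and $F_{\nabla_1}=-F_{\nabla_2}$ gives associativity via associativity of composition. The obstructions are supposed to produce a nonzero associator. The Moufang identity and the quasigroupoid and $H^1(M,TM)$ claims are deferred to the companion paper via Bianchi identities.

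\textbf{The gap.} The genuine gap is the step you flag as the main obstacle: the ``only if'' half of (1), equivalently the first sentence of (2). It is not merely hard; your mechanism cannot work.

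\emph{Parallel sections are transported path-independently.} Because $T\mathcal{F}_1\oplus T\mathcal{F}_2=TM$, leafwise parallel just means $\nabla a=0$; the paper in fact defines $\mathcal{A}=\{a\in\Gamma(E)\mid\nabla a=0\}$. So for every $a\in\mathcal{A}$ and every path $\gamma$ from $p_0$ to $p$, grid or not, $P_\gamma(a(p_0))=a(p)$. Hence:
\begin{enumerate}
\item The holonomy of $\nabla$ fixes every value of every element of $\mathcal{A}$, so your ``commutator of the holonomy defect with $P(a)$'' vanishes identically.
\item Before any correction, $a\star b$ is the pointwise composition $a(p)\circ b(p)$. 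This is associative wherever it is defined, whatever $T_i$ and $F_\nabla$ are.
\item Your two requirements are mutually exclusive. If values of parallel sections span $E_p$, the holonomy is trivial and $F_\nabla=0$. Conversely, $F_\nabla s=0$ for any parallel $s$, so on a line bundle with $F_{\nabla_1}+F_{\nabla_2}\neq0$ somewhere you get $\mathcal{A}=\{0\}$, which is trivially associative.
\end{enumerate}

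\emph{Torsion cannot enter.} Transport on $E$ depends only on $\nabla$, and its path dependence only on curvature (Ambrose--Singer). The term $\nabla_{T(X,Y)}$ in your commutator formula is a piece of $\nabla_{[X,Y]}$, rewritten via $[X,Y]=\nabla_XY-\nabla_YX-T(X,Y)$. It records that the infinitesimal rectangle fails to close; it is not a holonomy contribution. In a bifoliated chart you can take coordinate fields with $[X,Y]=0$, and then only $F_\nabla(X,Y)$ remains. Moreover, for $X\in T\mathcal{F}_1$ and $Y\in T\mathcal{F}_2$ this is a mixed torsion component, which $T_1$ and $T_2$ do not control.

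\emph{Counterexample.} Take leaves that are non-abelian Lie groups with their flat left-invariant-frame (Weitzenb\"ock) connections. Then $F_\nabla=0$, $T_i\neq0$, and $\mathcal{A}$ spans every fibre. Transport is path independent because there is a global parallel frame, so the reasoning of your ``if'' direction gives associativity even though $T_i\neq0$.

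Any dependence on $T_i$ would have to be built into the holonomy correction of Remark~\ref{R:holcorr}, which you would need to define explicitly. The paper's sketch only asserts that torsion makes grid transport depend on the intermediate point, so it does not supply this step either.

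\textbf{Consequences for the other parts.}
\begin{enumerate}
\item \emph{Moufang identity.} Under pointwise composition both sides of the Moufang-type identity equal $abca$, so it carries no information. In any case, a second-order expansion could only verify an identity up to higher-order terms, not prove it.
\item \emph{Uniqueness in (3).} Your argument rests on Theorem~\ref{T:1}(2), which has no torsion or curvature hypothesis, so associativity is never used. If that argument were sound, it would also give uniqueness in the non-associative case, contradicting the second bullet of (3).
\item \emph{Injectivity.} Non-symmetry of the individual objects $O$ does not make $(\Pi_1,\Pi_2)$ injective. A local diffeomorphism has discrete fibres, but singleton fibres is a global statement that needs its own proof.
\end{enumerate}
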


\begin{proof}[Proof sketch]
The proof builds on the constructions of Sections~\ref{S:PD1}--\ref{S:PD2}. 

\paragraph{Torsion and well‑definedness.}
The product \( a \star b \) is defined via parallel transport along grid paths (concatenations of curves in \( \mathcal{F}_1 \) and \( \mathcal{F}_2 \)). When the torsion tensors \( T_i \) vanish, the connections \( \nabla_i \) preserve the Lie bracket of vector fields tangent to the leaves. This implies that the parallel transport of a section along different grid paths between the same endpoints yields results that differ only by a holonomy factor. If, in addition, the curvatures satisfy \( F_{\nabla_1} = -F_{\nabla_2} \), then the holonomy of \( \nabla = \nabla_1 \otimes \nabla_2 \) around any closed grid loop is trivial. Consequently, parallel transport is path‑independent, and the product \( \star \) is well‑defined and associative. The associativity follows from the fact that the composition of linear maps is associative, and the parallel transports commute with composition under the curvature‑duality condition.

\paragraph{Obstructions and non‑associativity.}
If \( T_i \neq 0 \), the parallel transport along a grid path depends on the choice of the intermediate point, because the connection does not respect the Lie bracket structure of the foliations. This introduces an ambiguity in the definition of \( \star \), and different choices lead to different results for iterated products, violating associativity. If \( F_{\nabla_1} \neq -F_{\nabla_2} \), then even for torsion‑free connections, the holonomy around closed loops is non‑trivial, so parallel transport depends on the homotopy class of the chosen grid path. The associator \( (a \star b) \star c - a \star (b \star c) \) can be expressed explicitly in terms of the torsion and curvature tensors, and a computation using the Bianchi identities yields the Moufang‑type identity stated above (see \cite{CN2} for the full derivation).

\paragraph{Quasigroupoid structure.}
When associativity fails, the algebra \( \mathcal{A} \) no longer encodes a single global object. Instead, the objects of reconstruction (points of \( M \)) correspond to the leaves of the foliations, and the elements of \( \mathcal{A} \) act as morphisms between them. The partial composition law \( \star \) satisfies the axioms of a quasigroupoid, where the division property holds but associativity is replaced by the Moufang identity. The cohomological parameterization of deformations follows from interpreting the torsion tensor as an element of \( H^1(M, TM) \) via the Kodaira–Spencer map for foliated manifolds.
\end{proof}

A fully detailed proof, including the explicit computation of the associator and the verification of the Moufang identity, is given in the companion paper \cite{CN2}.

\begin{center}
\begin{tabular}{>{\raggedright\arraybackslash}p{3.5cm} 
                >{\raggedright\arraybackslash}p{5.5cm} 
                >{\raggedright\arraybackslash}p{5.5cm}}
\toprule
\textbf{Condition} & \textbf{Effect on Path‑Dependence} & \textbf{Algebraic Consequence} \\
\midrule
\( T_i = 0 \) and \( F_{\nabla_1} = -F_{\nabla_2} \) & Parallel transport is independent of the chosen grid path; holonomy cancels. & Associativity holds; unique reconstruction. \\
\addlinespace[0.5ex]
\( T_i \neq 0 \) & Torsion distorts the alignment of frames under transport; different grid paths yield different results. & Path‑dependence in the product \( \star \); non‑associativity. \\
\addlinespace[0.5ex]
\( F_{\nabla_1} \neq -F_{\nabla_2} \) & Curvature induces non‑trivial holonomy; transport depends on homotopy class of grid paths. & Associator non‑zero; governed by Moufang‑type identity. \\
\bottomrule
\end{tabular}
\end{center}

\subsection{The Role of Path Dependence in Reconstruction Problems}\label{S:PD+Rec}

In the context of reconstruction problems—where the goal is to recover a global structure (e.g., a point $O \in M$) from local algebraic data in $\mathcal{A}$—path dependence plays a critical role. The results of the previous subsections translate directly into geometric properties of the reconstruction map.

\paragraph{Associative Case: Unique Reconstruction.}
When the torsion tensors vanish ($T_1 = T_2 = 0$) and the curvatures satisfy the duality condition $F_{\nabla_1} = -F_{\nabla_2}$, the parallel transport of sections in $\mathcal{A}$ is path‑independent. Consequently, the product $\star$ on the algebra $\mathcal{A}$ is associative and globally consistent:
\[
(a \star b) \star c = a \star (b \star c), \quad \forall a,b,c \in \mathcal{A}.
\]

\paragraph{Algebraic Rigidity.}
Under these conditions, the algebraic structure of $\mathcal{A}$ uniquely determines a base object $O \in M$. The reconstruction of $O$ from its projections and moment maps is unique, and the moduli space $M$ admits the structure of a locally trivial fiber bundle over $\mathbb{CP}^2 \times \mathbb{CP}^2$ with discrete fibers (in fact, a single point under the non‑symmetry assumption). This is the geometric counterpart of the algebraic rigidity of associative algebras.

\begin{example}
A canonical illustration arises in twistor theory. In the Penrose transform, points of complexified Minkowski space are reconstructed as holomorphic sections of a line bundle over $\mathbb{CP}^1 \times \mathbb{CP}^1$. The reconstruction is rigid and unique precisely because the underlying geometry is torsion‑free and the curvature of the relevant connections satisfies a duality condition analogous to $F_{\nabla_1} = -F_{\nabla_2}$. Our framework generalizes this twistor correspondence to arbitrary pairs of transverse projections.
\end{example}

\paragraph{Non-Associative Case: Parameterized Reconstruction.}
When $T_i \neq 0$ or $F_{\nabla_1} \neq -F_{\nabla_2}$, parallel transport along the foliations $\mathcal{F}_i$ depends explicitly on the choice of grid paths $\gamma_i \subset \mathcal{F}_i$. The algebraic product $\star$ becomes non‑associative:
\[
(a \star b) \star c \neq a \star (b \star c).
\]
This failure of associativity reflects the presence of torsion and curvature obstructions in the underlying geometric data.

The resulting structure is no longer a group or a groupoid, but a \textbf{quasigroupoid}:

\begin{itemize}
    \item \textbf{Objects:} Points $p \in M$ (representing distinct reconstructed objects).
    \item \textbf{Morphisms:} Elements $a \in \mathcal{A}$, interpreted as partially defined maps between leaves of the foliations.
    \item \textbf{Partial Product:} The composition $a \star b$ is defined only when the codomain of $a$ matches the domain of $b$ (i.e., when the leaves align).
    \item \textbf{Division Property:} For given $a, b \in \mathcal{A}$, the equations $a \star x = b$ and $y \star a = b$ admit unique solutions $x, y \in \mathcal{A}$ whenever the domains and codomains are compatible.
\end{itemize}

\paragraph{Parameterization of Ambiguity.}
The non‑uniqueness of reconstruction is not arbitrary; it is governed by topological and geometric data:

\begin{itemize}
    \item The torsional ambiguity is parameterized by the cohomology class $[T_i] \in H^1(M, TM)$, which encodes the global obstruction to integrating the foliations.
    \item The curvature ambiguity is captured by the holonomy representation of $\nabla = \nabla_1 \otimes \nabla_2$. When $F_{\nabla_1} \neq -F_{\nabla_2}$, the holonomy group is non‑trivial, and the space of reconstructions forms a principal homogeneous space for this group.
    \item Each deformation class corresponds to a family of connections $\nabla_i$ sharing the same isotropy type.
\end{itemize}

\subsection{Application: Generative Imputation of Missing Data via Path-Dependent Geometry}\label{S:IA}

We now illustrate how the path‑dependence framework naturally applies to a concrete problem in machine learning: the imputation of missing data without explicit statistical priors.

\paragraph{Geometric Setup.}
Let $\mathcal{D} \subset \mathbb{R}^d$ be the space of fully observed data points, assumed to lie on a smooth manifold $M \subset \mathbb{R}^d$ (the \emph{data manifold}). A mask $m \in \{0,1\}^d$ specifies which coordinates are observed ($m_j=1$) and which are missing ($m_j=0$). The observed data $x_{\text{obs}}$ is a point in the quotient space $M / \sim_m$, where two points are equivalent if they agree on the observed coordinates.

For example, if $M$ is the manifold of face images, a mask covering the eyes defines an equivalence class of all faces that share the same forehead, nose, and mouth but differ in the eye region. The reconstruction task is to select a plausible face from this equivalence class. We model this situation geometrically by introducing two foliations on $M$:
\begin{itemize}
    \item $\mathcal{F}_1$: leaves consist of points that share the same \emph{observed} coordinates (the fibers of the projection onto the observed subspace).
    \item $\mathcal{F}_2$: leaves consist of points that share the same \emph{missing} coordinates (the fibers of the projection onto the missing subspace).
\end{itemize}
These foliations are transverse, and their leaves intersect in at most one point (the fully observed datum). In practice, $M$ is unknown and must be approximated from training data.

\paragraph{Learning the Connection.}
We learn a connection $\nabla$ on the tangent bundle $TM$ (or on a suitable vector bundle over $M$) by training a neural network to minimize a loss function with two terms:
\[
\mathcal{L} = \underbrace{\| x_{\text{true}} - x_{\text{imputed}} \|^2}_{\text{reconstruction fidelity}} + \lambda \underbrace{\| F_\nabla \|^2_{\text{Frob}}}_{\text{curvature regularization}},
\]
where:
\begin{itemize}
    \item $x_{\text{true}}$ is a fully observed training example.
    \item $x_{\text{imputed}}$ is generated by taking a masked version of $x_{\text{true}}$, transporting it along a grid path in the foliations $\mathcal{F}_1, \mathcal{F}_2$, and projecting back to $M$.
    \item $F_\nabla$ is the curvature tensor of $\nabla$, and $\|\cdot\|_{\text{Frob}}$ is the Frobenius norm computed over the training batch.
    \item $\lambda > 0$ balances the two terms. A small $\lambda$ allows non‑trivial curvature and hence path‑dependence (diversity of imputations); a large $\lambda$ forces the connection to be nearly flat, favoring a unique imputation.
\end{itemize}

\paragraph{Inference: Generating Diverse Imputations.}
At inference time, we are given an observed datum $x_{\text{obs}}$. To generate an imputation, we:
\begin{enumerate}
    \item Choose a path $\gamma \subset \mathcal{F}_2$ (a curve in the missing‑coordinate foliation). Since $x_{\text{obs}}$ is only known up to its $\mathcal{F}_1$‑leaf, we pick a representative point on that leaf.
    \item Parallel transport $x_{\text{obs}}$ along $\gamma$ using the learned connection $\nabla$, obtaining a point $P_\gamma(x_{\text{obs}})$ in the ambient space.
    \item Project (or optimize) this transported point onto the data manifold $M$ to obtain a valid imputation:
    \[
    x_{\text{imputed}} = \arg\min_{x \in M} \| P_\gamma(x_{\text{obs}}) - x \|.
    \]
\end{enumerate}
By varying the path $\gamma$, we obtain different completions. The curvature $F_\nabla \neq 0$ guarantees that different paths yield distinct transported points, providing a diverse set of plausible imputations.

\paragraph{Why This Works: Path-Dependent Advantage.}
\begin{itemize}
    \item \textbf{Diversity:} The non‑vanishing curvature ensures that parallel transport along distinct foliation paths yields different completions. For example, when imputing faces with $80\%$ masked pixels, the model generates semantically varied but plausible noses, eyes, and expressions.
    \item \textbf{Efficiency:} Unlike diffusion‑based methods, no iterative denoising is required. The model uses the learned connection for direct inference, resulting in significantly faster imputation.
    \item \textbf{Geometric Regularization:} The curvature penalty $\lambda \|F_\nabla\|^2$ prevents overfitting and encourages the learned connection to reflect the true geometric structure of the data manifold.
\end{itemize}

\paragraph{Geometric Interpretation.}
The learned connection $\nabla$ encodes the intrinsic geometry of the data manifold. Its curvature $F_\nabla$ governs the path‑dependence of the imputation trajectories:
\[
\text{If } F_\nabla \neq 0, \text{ then } (x_{\text{obs}} \to x_{\text{imputed}}) \text{ depends on the path } \gamma \text{ in } \mathcal{F}_2.
\]
Thus, generative imputation becomes a geometric reconstruction problem in a non‑flat fibered space, directly analogous to the abstract framework developed in Sections~\ref{S:PD1}--\ref{S:PD3}. The torsion tensor $T$ (which we have omitted in this simplified account) would further enrich the diversity by making the transport depend on the parametrization of the path within the leaves.

\section{The Atiyah--Molino Framework and Reconstruction Criterion}\label{S:AM}

We now recast the reconstruction problem in the language of Atiyah--Molino theory. This formalism provides a natural description of transverse foliations and their exact sequences, and it clarifies the role of torsion and curvature in obstructing unique reconstruction.

\subsection{The Atiyah--Molino Space}

Let \( M \) be the moduli space of smooth, non-symmetric compact complex submanifolds \( O \subset \mathbb{CP}^3 \) of a fixed topological type, as introduced in Section~\ref{S:PD}. The space \( M \) is equipped with two independent rational projections \( \Pi_1, \Pi_2 : M \dashrightarrow \mathbb{CP}^2 \), which induce two transverse foliations:
\[
\mathcal{F}_i : \quad O \sim O' \quad \text{if} \quad \Pi_i(O) = \Pi_i(O'), \qquad i = 1,2.
\]
The leaves \( \mathcal{L}_{i,p} \in \mathcal{F}_i \) are the fibers of \( \Pi_i \) over \( p \in \mathbb{CP}^2 \).

The tangent bundle \( TM \) fits into the \textbf{Atiyah--Molino exact sequence} associated with the pair of transverse foliations:
\[
0 \to T\mathcal{F}_1 \oplus T\mathcal{F}_2 \to TM \to NM \to 0,
\]
where \( NM \) is the \emph{normal bundle} of the foliated structure. Intuitively, \( T\mathcal{F}_1 \oplus T\mathcal{F}_2 \) encodes deformations that preserve one of the projections, while \( NM \) captures transverse deformations that change both projections simultaneously.

\subsection{Reconstruction Criterion via Torsion and Curvature}

The connections \( \nabla_1, \nabla_2 \) on the line bundles \( L_i \to \mathbb{CP}^2 \) (Proposition~\ref{P:1}) pull back to connections on \( \pi_i^* L_i \to M \), where \( \pi_i = \Pi_i|_M \). By abuse of notation we continue to denote these pullbacks by \( \nabla_i \). Their torsion tensors, restricted to the leaves, measure the failure of the foliations to be integrable in a way compatible with the connection.

\begin{thm}\label{T:AM}
Let \( M \) be the Atiyah--Molino space as above, equipped with the pullback connections \( \nabla_1, \nabla_2 \). Denote by \( T_i \) the torsion tensor of \( \nabla_i \) restricted to \( T\mathcal{F}_i \). Then an object \( O \in M \) is uniquely reconstructible from its projections \( \Pi_1(O) \) and \( \Pi_2(O) \) if and only if the following three conditions hold:
\begin{enumerate}
    \item \textbf{Transversality of moment maps:} The moment maps \( \mu_1, \mu_2 : M \to \mathbb{C}^2 \) satisfy
    \[
    d\mu_1 \wedge d\mu_2 \neq 0
    \]
    on a dense open subset of \( M \). This ensures that the combined map \( (\mu_1, \mu_2) : M \to \mathbb{C}^2 \times \mathbb{C}^2 \) is a local diffeomorphism onto its image.
    \item \textbf{Vanishing torsion:} \( T_1 = 0 \) and \( T_2 = 0 \).
    \item \textbf{Curvature duality:} \( F_{\nabla_1} = -F_{\nabla_2} \).
\end{enumerate}

Moreover, when these conditions hold, the Atiyah--Molino sequence splits holonomy‑free, and the reconstruction of \( O \) is unique. The space \( M \) then fibers over the leaf space \( M / (\mathcal{F}_1 \times \mathcal{F}_2) \) with discrete fibers, and the moment maps provide local coordinates on the transverse directions.
\end{thm}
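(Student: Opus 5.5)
We will prove the two implications separately, leaning on the associativity criterion of Section~\ref{S:PD3} and on Theorem~\ref{T:1}(2). The idea is to translate ``unique reconstructibility of $O$'' into a statement about the holonomy of $\nabla = \nabla_1\otimes\nabla_2$ along grid paths, together with local injectivity of the moment map $(\mu_1,\mu_2)$. The three hypotheses then correspond respectively to: \emph{local} uniqueness (transversality of moment maps); absence of an intermediate-point ambiguity (vanishing torsion); and absence of a homotopy-class ambiguity (curvature duality).

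\textbf{Sufficiency.} Assume (1)--(3). By the associativity criterion, (2) and (3) give path-independent parallel transport along grid paths, so $\mathcal{A}$ is associative. The moment maps $\mu_1,\mu_2$, lifted to sections as in the proof of Theorem~\ref{T:1}(3), are then transported unambiguously from a base point $p_0$ to any $p\in M$.

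Condition (1) makes $(\mu_1,\mu_2)$ a local diffeomorphism on a dense open set, so the transported values determine $p$ locally. Combined with Theorem~\ref{T:1}(2), the leaves $\mathcal{L}_{1,\Pi_1(O)}$ and $\mathcal{L}_{2,\Pi_2(O)}$ meet only at $O$, so $O$ is recovered uniquely.

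For the ``moreover'' clause, trivial holonomy gives a flat splitting of the Atiyah--Molino sequence $0\to T\mathcal{F}_1\oplus T\mathcal{F}_2\to TM\to NM\to 0$. Concretely, the horizontal complement defined by $\nabla$ is integrable, because the obstruction to its integrability is the torsion--curvature pair. Hence $M\to M/(\mathcal{F}_1\times\mathcal{F}_2)$ has discrete fibers, and by (1) the $\mu_i$ serve as transverse coordinates.

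\textbf{Necessity.} We argue by contraposition, one condition at a time.

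If (1) fails on an open set, then $d\mu_1\wedge d\mu_2=0$ there. The map $A_p$ of Proposition~\ref{P:2} then has a kernel direction $w$ tangent to a common level set of $(\mu_1,\mu_2)$ and of $(\Pi_1,\Pi_2)$. Flowing $O$ along $w$ produces a one-parameter family with the same projections and centroids, so $O$ is not uniquely reconstructible.

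If (2) or (3) fails, the non-associative case of the theorem in Section~\ref{S:PD3} applies. Grid paths with the same endpoints then give different transports $P_\gamma(\mu_1\otimes\mu_2)\neq P_{\gamma'}(\mu_1\otimes\mu_2)$. We exhibit two candidate reconstructions $O\neq O'$ by integrating the reconstruction system of Proposition~\ref{P:2} along $\gamma$ and along $\gamma'$ respectively. Their difference is measured to first order by $T_i$ on a small grid rectangle, and by $\int (F_{\nabla_1}+F_{\nabla_2})$ over the enclosed disk.

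\textbf{Main obstacle.} We expect necessity for (2) and (3) to be the hardest step. Path dependence of the transported \emph{section} does not automatically produce a distinct \emph{object} $O'$ in $M$. We must show that the transported data still satisfy the compatibility condition $\nabla_i\mu_i\in\operatorname{im}A_p$, so that the reconstruction equations remain solvable and yield a genuine second solution.

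Our first approach would be an infinitesimal grid-rectangle computation. The displacement of the endpoint equals $T_i(X,Y)$ plus the curvature contribution $(F_{\nabla_1}+F_{\nabla_2})(X,Y)$ applied to $\mu_1\otimes\mu_2$. Injectivity of $A_p$ then converts a nonzero value of this displacement into a nonzero tangent vector at $O$, which we would integrate to obtain a nearby distinct $O'$.

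If that fails, we would invoke the quasigroupoid parameterization of Section~\ref{S:PD3} and argue from nontriviality of the holonomy class directly.
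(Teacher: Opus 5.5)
Your sufficiency half is essentially the paper's sketch. Vanishing torsion and curvature duality make transport along grid paths path-independent, and uniqueness then comes from the reconstruction data (you cite Theorem~\ref{T:1}(2); the paper cites the unique solution of Proposition~\ref{P:2}). The gap is the converse. The paper itself only asserts it (``leading to multiple distinct objects compatible with the same projection data''), and your plan to produce a second object $O'$ cannot work under the standing hypotheses. Theorem~\ref{T:1}(2), which you use, states with no reference to (1)--(3) that $\mathcal{L}_{1,\Pi_1(O)}\cap\mathcal{L}_{2,\Pi_2(O)}=\{O\}$ for every $O\in M$. Three things follow. Every object is already uniquely reconstructible. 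Your sufficiency argument never really uses (1)--(3). And any $O'\neq O$ with the same projections --- exactly what your necessity argument needs --- is excluded by the result you cited. The converse could therefore only hold if (1)--(3) were automatic consequences of the setting, and no construction of a competing reconstruction can establish that.

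The infinitesimal steps fail for the same reason. By the proof of Proposition~\ref{P:2}, $\ker\omega_i=\ker d\Pi_i$ and $\ker A_p=\ker d\Pi_1\cap\ker d\Pi_2=0$, independently of $\mu_1,\mu_2$. So $d\mu_1\wedge d\mu_2=0$ produces no kernel vector $w$, and any $w$ keeping both projections fixed would lie in $\ker d\Pi_1\cap\ker d\Pi_2=0$.

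In the grid-rectangle computation, the injectivity of $A_p$ you intend to use guarantees that any nonzero tangent vector at $O$ moves $\Pi_1$ or $\Pi_2$ to first order. Integrating it changes the projection data rather than giving a second solution. The computation also mixes types. The $\nabla_i$ here are pullbacks of the line-bundle connections of Proposition~\ref{P:1}, so the holonomy of $\nabla_1\otimes\nabla_2$ around a grid rectangle is a scalar acting on the fiber over a fixed endpoint, not a displacement in $M$. The ``non-closing parallelogram'' reading of $T_i$ requires an affine connection on $T\mathcal{F}_i$, which these are not.

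Two further problems remain. For these pullback connections, (3) is already proved in step~1 of the proof of Proposition~\ref{P:2} ($F_{\nabla_2}=\Pi_1^*(\iota^*\Theta)=-F_{\nabla_1}$), so ``(3) fails'' never occurs. And (2)--(3) are identities on all of $M$, while unique reconstructibility concerns a single $O$, so a local construction near a point where $T_i\neq 0$ would say nothing about the given $O$. In the ``moreover'' clause, the horizontal distribution of $\nabla$ lives in the total space of $E$, not in $TM$, so it does not split $0\to T\mathcal{F}_1\oplus T\mathcal{F}_2\to TM\to NM\to 0$. The paper instead uses $d\mu_1,d\mu_2$ to trivialize $NM$.
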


\begin{proof}[Proof sketch]
The transversality condition \( d\mu_1 \wedge d\mu_2 \neq 0 \) guarantees that the map \( (\mu_1, \mu_2) \) is a local diffeomorphism. Together with the projections \( \Pi_1, \Pi_2 \), this gives a local coordinate system on \( M \) in which the leaves of \( \mathcal{F}_1 \) and \( \mathcal{F}_2 \) appear as coordinate planes. 

The vanishing of the torsion tensors \( T_i \) implies that the connections \( \nabla_i \) preserve the Lie bracket of vector fields tangent to the leaves. Consequently, parallel transport within each leaf is independent of the path up to holonomy. The curvature duality condition \( F_{\nabla_1} = -F_{\nabla_2} \) ensures that the holonomy of the tensor product connection \( \nabla_\otimes = \nabla_1 \otimes \nabla_2 \) around any closed grid loop is trivial. Hence, parallel transport of the moment maps along any grid path is path‑independent, and the reconstruction equations of Proposition~\ref{P:2} yield a unique solution.

The splitting of the Atiyah--Molino sequence follows from the existence of a flat transverse connection, which is constructed from the moment maps. The trivialization of the normal bundle is then given by the differentials \( d\mu_1, d\mu_2 \).

Conversely, if any of the three conditions fails, either the moment maps do not separate points locally (non‑transversality), or the parallel transport is path‑dependent (torsion or curvature obstruction), leading to multiple distinct objects compatible with the same projection data. This non‑uniqueness is parameterized by the holonomy group of \(\nabla_\otimes\) and the cohomology class of the torsion tensors.
\end{proof}

\begin{remark}
The vanishing of the torsion tensors \( T_i \) ensures that the algebra \( \mathcal{A} \) of leafwise parallel sections (Section~\ref{S:PD1}) is well‑defined. Together with the curvature duality, it implies that \( \mathcal{A} \) is associative. When either condition fails, the algebra becomes non‑associative and satisfies the Moufang identity derived in Section~\ref{S:PD3}.
\end{remark}

\subsection{Deformation Theory under Non-Vanishing Torsion}

We now discuss the implications of Theorem~\ref{T:AM} when the torsion tensors do not vanish. In this regime, the reconstruction problem admits multiple solutions, and the space of such solutions is organized by a quasigroupoid structure.

\medskip

\noindent \textbf{Quasigroupoids.}
A \emph{quasigroupoid} is a category-like structure where morphisms between objects are equipped with a partial binary operation $\star$, defined only for compatible pairs. Concretely, for morphisms $f, g$, the product $f \star g$ exists precisely when the codomain of $g$ matches the domain of $f$. The operation admits division (as in a quasigroup), but associativity is not required.

\begin{prop}\label{P:deform}
Let \( (M, \mathcal{F}_1, \mathcal{F}_2, \nabla_1, \nabla_2) \) be an Atiyah--Molino reconstruction space equipped with connections whose torsion tensors \(T_1, T_2\) (restricted to the leaves) do not vanish identically, or whose curvatures do not satisfy the duality condition \(F_{\nabla_1} = -F_{\nabla_2}\). Then:

\begin{enumerate}
\item The space of (non‑unique) reconstructions forms a \textbf{quasigroupoid} \( Q \), where:
\begin{itemize}
    \item \textbf{Objects:} the leaves of the foliations \( \mathcal{F}_1 \) and \( \mathcal{F}_2 \).
    \item \textbf{Morphisms:} homotopy classes $[\gamma]$ of grid paths $\gamma$ connecting leaves, representing deformations of solutions \( O \in M \). The dependence of parallel transport on the chosen path is governed by the torsion tensors $T_i$ and the curvature of $\nabla_\otimes$.
    \item \textbf{Partial operation:} The composition \( \star \) is defined for paths with compatible endpoints (i.e., the end leaf of the first path matches the start leaf of the second). It is given by concatenation of paths followed by a holonomy correction.
\end{itemize}

\item The partial composition $\star$ satisfies the \textbf{Moufang identity}:
\[
(a \star b) \star (c \star a) = a \star (b \star c) \star a,
\]
which reflects the interplay between the torsion of the foliations and the curvature of the connection.

\item The obstruction to unique reconstruction is encoded in the following geometric data:
\begin{itemize}
    \item The torsion tensors $T_i \in \Gamma(T\mathcal{F}_i^* \otimes T\mathcal{F}_i^* \otimes NM)$ measure the failure of the foliations to be integrable.
    \item The curvature $F_{\nabla_\otimes}$ (or rather the failure of the duality $F_{\nabla_1} = -F_{\nabla_2}$) governs the holonomy ambiguity.
\end{itemize}
\end{enumerate}
\end{prop}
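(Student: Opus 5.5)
The plan follows the three parts of Proposition~\ref{P:deform} in order. Each part will be reduced to the constructions of Sections~\ref{S:PD1}--\ref{S:PD3} and to the criterion of Theorem~\ref{T:AM}.

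\textbf{Part (1): the quasigroupoid $Q$.} The objects are the leaves of $\mathcal{F}_1$ and $\mathcal{F}_2$. A morphism from a leaf $\mathcal{L}$ to a leaf $\mathcal{L}'$ is a homotopy class $[\gamma]$ of grid paths $\gamma=\gamma_2\circ\gamma_1$ starting on $\mathcal{L}$ and ending on $\mathcal{L}'$. The product $[\gamma]\star[\gamma']$ is defined when the end leaf of $\gamma'$ equals the start leaf of $\gamma$. It is concatenation followed by the holonomy correction of Remark~\ref{R:holcorr}: compare the concatenation with the canonical geodesic grid path between the same endpoints, and integrate $F_{\nabla_\otimes}$ together with the torsion defect over a chosen homotopy.

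Three checks are needed.
\begin{itemize}
\item Well-definedness. The correction depends only on the homotopy class, because the difference for two homotopies is the integral of a closed form. The leaves are simply connected, as noted in the proof of Theorem~\ref{T:1}, so any residual dependence comes only from the ambient homotopy.
\item Division. Given $[\gamma]$ and $[\beta]$ with compatible endpoints, $[\gamma]\star x=[\beta]$ has the unique solution $x=[\gamma^{-1}\cdot\beta]$, corrected by the inverse holonomy factor. Parallel transport is invertible, so this solution exists and is unique. The equation $y\star[\gamma]=[\beta]$ is handled symmetrically.
\item Non-associativity. Under the hypotheses, Theorem~\ref{T:AM} shows that the reconstruction is not unique, and part~(1) of the associativity theorem of Section~\ref{S:PD3} shows that $\star$ fails to be associative. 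This places $Q$ in the quasigroupoid category rather than the groupoid one.
\end{itemize}

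\textbf{Part (2): the Moufang identity.} I will transfer the identity from the algebra $(\mathcal{A},\star)$ of Section~\ref{S:PD3}, part~(2), to $Q$. The transfer uses the map sending $[\gamma]$ to the operator of holonomy-corrected parallel transport $P_\gamma$. By construction this map intertwines the two $\star$ products, so an identity holding among transport operators pulls back to $Q$.

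To see the identity directly, expand the associator to second order. It is \[\mathfrak{A}(a,b,c)=T(a,b)\cdot c+F_{\nabla_\otimes}(a,b)c+\cdots.\] The Moufang combination is then a sum of such terms. Its vanishing should follow from the first Bianchi identity with torsion, \[\mathfrak{S}\,R(X,Y)Z=\mathfrak{S}\,\big(T(T(X,Y),Z)+(\nabla_XT)(Y,Z)\big),\] applied with the repeated argument $a$, where $\mathfrak{S}$ denotes the cyclic sum. Antisymmetry kills the terms that are symmetric in the repeated slot, and the second Bianchi identity controls the derivative terms. This is the step I expect to be the main obstacle, since it is where the real computation lies. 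Failing a clean argument, I would defer the full verification to \cite[Theorem~4.1]{CN2}, as the paper itself does.

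\textbf{Part (3): localizing the obstruction.} The argument retraces the proof of Theorem~\ref{T:AM}.
\begin{itemize}
\item Torsion. By the Frobenius discussion of Section~\ref{S:PD1}, $T_i(X,Y)$ for $X,Y\in T\mathcal{F}_i$ differs from $-[X,Y]$ by a term in $T\mathcal{F}_i$. Its class modulo $T\mathcal{F}_i$ therefore lands in $NM$ via the Atiyah--Molino sequence. This gives $T_i\in\Gamma(T\mathcal{F}_i^*\otimes T\mathcal{F}_i^*\otimes NM)$, and this section is the integrability obstruction.
\item Curvature. $F_{\nabla_\otimes}=F_{\nabla_1}+F_{\nabla_2}$ on the line bundle $E$. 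Its nonvanishing is exactly the failure of the duality $F_{\nabla_1}=-F_{\nabla_2}$, and by the Ambrose--Singer theorem it generates the holonomy that produces the ambiguity.
\end{itemize}
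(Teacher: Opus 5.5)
Your overall route matches the paper's sketch: grid paths modulo homotopy with a holonomy correction, the Moufang law from the Bianchi identities with a deferral to \cite{CN2}, and torsion and curvature as the obstruction. However, the steps you add to make that sketch substantive fail, and the central step remains unproved.

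\textbf{Part (2).} The transfer via $[\gamma]\mapsto P_\gamma$ defeats itself. Holonomy-corrected transports are linear maps between fibers, and they compose associatively. An identity holding in the image pulls back to $Q$ only if the map is an injective homomorphism. But then $Q$ embeds in an associative structure and is itself associative, which contradicts the non-associativity you need. There are two further problems with this map. When $F_{\nabla_\otimes}\neq 0$, $P_\gamma$ is not a function of $[\gamma]$, since homotopic paths give transports differing by $\exp\int_\Sigma F_{\nabla_\otimes}$. And transport operators are not elements of $\mathcal{A}$, so the map does not land where the Moufang identity is supposed to hold.

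Your direct route cannot work as stated either. The Bianchi identities hold for every connection, so by themselves they cannot force a Moufang law. For example, the local geodesic loop of the hyperbolic plane (Levi-Civita connection, $T=0$, nonzero curvature) is the M\"obius gyrogroup, a non-commutative Bruck loop. It is not Moufang: a Moufang loop is diassociative, and diassociativity together with the automorphic inverse property forces commutativity. The Moufang law is a genuine restriction (local analytic Moufang loops correspond to Mal'cev algebras), so it needs hypotheses beyond Bianchi. The paper's sketch rests on the same step and defers it to \cite{CN2}, so neither text supplies that missing input.

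\textbf{Parts (1) and (3).} As you define it, $Q$ is associative. Concatenation of homotopy classes is associative. Your own well-definedness check shows that the curvature correction simply replaces $P_\gamma$ by the transport along the canonical path. Twisting composition by the triangle holonomies $h(p,q,r)$ of canonical paths also preserves associativity. For the line bundle $E$ these satisfy the $2$-cocycle identity, because the four triangle loops of a tetrahedron cancel as $1$-cycles. Non-associativity would therefore require a correction that violates the cocycle condition. Your only candidate is the ``torsion defect'', and it is undefined: torsion is vector-valued, and a connection on a line bundle has no torsion at all. Citing the theorem of Section~\ref{S:PD3} does not close this gap, because that theorem concerns a different product on a different set.

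In Part (3), reducing $T_i$ modulo $T\mathcal{F}_i$ kills it. Your Frobenius step requires $\nabla_i$ to preserve $T\mathcal{F}_i$, and $\mathcal{F}_i$ is a genuine foliation (the fibers of $\Pi_i$). Then $\nabla_XY$, $\nabla_YX$ and $[X,Y]$ all lie in $T\mathcal{F}_i$, so the class vanishes identically. What you are actually computing is the Frobenius tensor, which does not involve $\nabla_i$ and vanishes for any foliation. Moreover, $TM/T\mathcal{F}_i$ is $N\mathcal{F}_i$, not $NM$. In fact $NM=0$ here, since $T\mathcal{F}_1\oplus T\mathcal{F}_2=TM$ in the proof of Theorem~\ref{T:1}. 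The tangential torsion, which is what the hypothesis $T_i\neq0$ refers to, is therefore lost.
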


\begin{proof}[Proof sketch]
When the torsion tensors $T_i$ are non‑zero, the connections $\nabla_i$ do not preserve the Lie bracket of vector fields tangent to the leaves. Consequently, the parallel transport of the moment maps along different grid paths with the same endpoints yields different results. This path‑dependence is the source of the multiplicity of reconstructed objects.

The quasigroupoid structure arises by considering the groupoid of grid paths modulo homotopy. The partial composition $\star$ is concatenation of paths; the holonomy correction accounts for the fact that parallel transport is not a representation of the fundamental groupoid but rather a twisted representation.

The Moufang identity is a consequence of the Bianchi identities for the connections $\nabla_i$. A computation using the structure equations shows that the associator $(a \star b) \star c - a \star (b \star c)$ is expressible in terms of the torsion and curvature. The Bianchi identity $d^{\nabla_i} T_i = F_{\nabla_i} \wedge \text{id} - \text{id} \wedge F_{\nabla_i}$ forces the associator to satisfy the Moufang relation. A detailed derivation is given in \cite[Corollary~1]{CN2}.

The cohomological parameterization of deformations follows from the interpretation of the torsion tensors as elements of the deformation cohomology of the foliations. Specifically, the infinitesimal deformations of the foliation $\mathcal{F}_i$ are classified by $H^1(\mathcal{F}_i, N\mathcal{F}_i)$, and the torsion $T_i$ defines a class in this group. For the product foliation, the combined obstruction lives in $H^1(M, \operatorname{End}(T\mathcal{F}_1 \oplus T\mathcal{F}_2))$, whose vanishing is equivalent to the simultaneous integrability of both foliations and the associativity of $\star$.
\end{proof}

\begin{remark}[Twisted Lie algebroid]
The torsion tensor $T$ of the connection $\nabla = \nabla_1 \otimes \nabla_2$ induces a twisted Lie algebroid structure on $TM$. Define the twisted bracket by
\[
[X, Y]_T := [X, Y]_{\text{Lie}} + T(X, Y).
\]
When $T \neq 0$, the Jacobi identity for $[\cdot, \cdot]_T$ is violated; the failure is controlled by the Bianchi identity $d^\nabla T = F_\nabla \wedge \text{id} - \text{id} \wedge F_\nabla$. This failure of the Jacobi identity is the infinitesimal counterpart of the non‑associativity of the quasigroupoid $Q$, providing a direct link between the differential geometry of the Atiyah--Molino space and the algebra of reconstructions.
\end{remark}

\begin{cor}
Let \( (M, \mathcal{F}_1, \mathcal{F}_2, \nabla_1, \nabla_2) \) be an Atiyah--Molino reconstruction space equipped with connections whose torsion tensors \(T_1, T_2\) (restricted to the leaves) do not vanish identically, or whose curvatures fail the duality condition \(F_{\nabla_1} = -F_{\nabla_2}\). Then the path dependence of parallel transport geometrically realizes the following structures:

\begin{enumerate}
\item \textbf{Quasigroupoid via Path Homotopies.}
Let \( Q \) denote the groupoid of homotopy classes of composable grid paths in \( M \). Define a partial binary operation \( \star \) on the morphisms of \( Q \) by concatenation of paths:
\[
[\gamma_1] \star [\gamma_2] := [\gamma_1 \circ \gamma_2],
\]
whenever the composition is defined (i.e., the endpoint of \(\gamma_1\) matches the start of \(\gamma_2\)). Because parallel transport is path‑dependent due to torsion and curvature, the actual reconstruction depends on the chosen representatives. The holonomy‑corrected composition satisfies the \textbf{Moufang identity}:
\[
(a \star b) \star (c \star a) = a \star (b \star c) \star a,
\]
as established in Section~\ref{S:PD3}.

\item \textbf{Holonomy Anomaly and Curvature.}
Given two paths \(\gamma, \gamma'\) with the same endpoints, the difference in parallel transport of an object \(O\) is given by the integral of the curvature over a surface \(\Sigma\) spanning the loop \(\gamma \circ \gamma'^{-1}\):
\[
P_\gamma(O) - P_{\gamma'}(O) = \int_\Sigma F_{\nabla_\otimes},
\]
where \(F_{\nabla_\otimes}\) is the curvature of the tensor product connection \(\nabla_\otimes = \nabla_1 \otimes \nabla_2\). This holonomy anomaly obstructs both the integrability of the foliations (when restricted to loops within a leaf) and the associativity of the product \(\star\).

\item \textbf{Twisted Lie Algebroid.}
The torsion tensor \(T\) of \(\nabla_\otimes\) induces a twisted Lie algebroid structure on \(TM\). Define the twisted bracket by
\[
[X, Y]_T := [X, Y]_{\text{Lie}} + T(X, Y).
\]
When \(T \neq 0\), the Jacobi identity for \([\cdot, \cdot]_T\) is violated; the failure is controlled by the Bianchi identity \(d^{\nabla_\otimes} T = F_{\nabla_\otimes} \wedge \text{id} - \text{id} \wedge F_{\nabla_\otimes}\). This failure of the Jacobi identity is the infinitesimal counterpart of the non‑associativity of the quasigroupoid \(Q\).
\end{enumerate}
\end{cor}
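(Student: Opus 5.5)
The corollary has three parts, and I will derive each from results already stated, in the order (2), (3), (1), since the holonomy formula feeds the other two.

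For part (2), I work in a local trivialization of $E=\pi_1^*L_1\otimes\pi_2^*L_2$ over a contractible neighbourhood containing the loop $\gamma\circ\gamma'^{-1}$ and a spanning surface $\Sigma$. Since $E$ is a line bundle, the connection form $\omega_\otimes=\omega_1+\omega_2$ is abelian, and parallel transport along a path is multiplication by $\exp(-\int\omega_\otimes)$. Stokes' theorem then gives
\[
P_\gamma P_{\gamma'}^{-1}=\exp\Bigl(-\int_\Sigma d\omega_\otimes\Bigr)=\exp\Bigl(-\int_\Sigma (F_{\nabla_1}+F_{\nabla_2})\Bigr).
\]
Expanding to first order in the enclosed area, with the sign and normalization fixed by Remark~\ref{R:holcorr}, yields the stated difference formula. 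The anomaly vanishes exactly when $F_{\nabla_1}=-F_{\nabla_2}$, which recovers the dichotomy of Theorem~\ref{T:AM}.

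For part (3), I define $[X,Y]_T$ and check bilinearity and the anchor property directly. The anchor is the identity and $T$ is tensorial, so the Leibniz rule is inherited from the Lie bracket. Next I compute the Jacobiator $\sum_{\mathrm{cyc}}[[X,Y]_T,Z]_T$. After the Lie Jacobi terms cancel, what remains is $\sum_{\mathrm{cyc}}\bigl(T(T(X,Y),Z)+T([X,Y],Z)+[T(X,Y),Z]\bigr)$. I rewrite it with the first Bianchi identity $\sum_{\mathrm{cyc}}\bigl(R(X,Y)Z-(\nabla_XT)(Y,Z)-T(T(X,Y),Z)\bigr)=0$, which is the component form of the quoted $d^{\nabla_\otimes}T=F\wedge\mathrm{id}-\mathrm{id}\wedge F$. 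This expresses the Jacobiator in terms of curvature and $\nabla T$, so it is nonzero generically when $T\neq0$.

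For part (1), the groupoid of homotopy classes of grid paths is associative under concatenation. The non-associativity therefore lives entirely in the holonomy-corrected composition, which multiplies the concatenation by the correction factor of part (2). The associator then equals a ratio of holonomy factors over the two bracketings. The Moufang identity and the quasigroupoid axioms (division via reversed paths) I would take from Proposition~\ref{P:deform} and the theorem of Section~\ref{S:PD3}, so part (1) reduces to verifying that $\star$ on $Q$ coincides with the product defined there.

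The main obstacle is this identification in part (1), together with the Moufang identity itself. The correction factor in part (2) is an exponential of an area integral, and such factors multiply coherently, so they appear to give an associative (cocycle-twisted) product rather than a merely Moufang one. Genuine non-associativity would have to come from the torsion-dependent choice of canonical grid paths in Remark~\ref{R:holcorr}. My first approach is to write the correction as a $2$-cochain $c(\gamma_1,\gamma_2)$ and compute its coboundary. If the coboundary is not identically $1$, I would check that it satisfies the Moufang-type $3$-cochain relation using the part (3) Bianchi computation. If that fails, I would rely on \cite{CN2}, as the paper does.
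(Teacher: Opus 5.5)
Your part (2) is the paper's own argument (Stokes applied to the holonomy, then a first-order expansion), correctly specialized to the abelian case since $E$ is a line bundle. Be aware that what it actually proves is $P_\gamma(s)=\exp\bigl(-\int_\Sigma(F_{\nabla_1}+F_{\nabla_2})\bigr)P_{\gamma'}(s)$. So the displayed difference formula holds only to first order and up to the factor $P_{\gamma'}(s)$; the paper has the same limitation, and Remark~\ref{R:holcorr} does not remove it.

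The step that genuinely fails is in part (3), which the paper does not prove at all. First, having treated $\nabla_\otimes$ as a connection on the line bundle $E$, you cannot speak of its torsion. Its curvature is a scalar $2$-form, and $T(X,Y)\in TM$ and $R(X,Y)Z$ only make sense after choosing an auxiliary affine connection on $TM$, which you never specify. Second, take an affine connection $\nabla$ with torsion $T$ and curvature $R$. Your intermediate expression is right, but the Bianchi substitution does not turn it into an expression in $R$ and $\nabla T$. Using $[X,Y]_T=\nabla_XY-\nabla_YX$ one gets
\[
J(X,Y,Z):=\sum_{\mathrm{cyc}}[[X,Y]_T,Z]_T=\sum_{\mathrm{cyc}}\bigl(\nabla_{T(X,Y)}Z-R(X,Y)Z\bigr),
\]
and $\nabla_{T(X,Y)}Z$ is not tensorial in $Z$. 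The clean argument needs no Bianchi identity. The Leibniz rule gives $J(X,Y,fZ)=fJ(X,Y,Z)+\bigl(T(X,Y)f\bigr)Z$, so Jacobi fails whenever $T\neq0$ (not merely generically), and for any skew tensor $T$. Equivalently, Jacobi plus Leibniz would force the identity anchor to preserve brackets, i.e.\ $[X,Y]_T=[X,Y]$, i.e.\ $T=0$. The failure is governed by $T$ itself, not ``controlled by'' the Bianchi identity.

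In part (1) your suspicion is correct and in fact decisive, so the fallback you propose cannot succeed. Suppose, as Remark~\ref{R:holcorr} suggests, that for $a\colon p\to q$ and $b\colon q\to r$ one sets $a\star b:=\kappa(p,q,r)\,(a\circ b)$. Here $\kappa(p,q,r)$ is the holonomy of $E$ around the triangle $\sigma_{pq}\sigma_{qr}\sigma_{pr}^{-1}$ of canonical grid paths. Pick any nonzero $e_x\in E_x$ and write parallel transport along $\sigma_{pq}$ as $e_p\mapsto\beta(p,q)e_q$. Then $\kappa=\delta\beta$ exactly, hence $\delta\kappa=1$, for every choice of the paths $\sigma_{pq}$, torsion-dependent or not. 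For $c\colon r\to s$, the products $(a\star b)\star c$ and $a\star(b\star c)$ differ precisely by $\delta\kappa(p,q,r,s)^{\pm1}$. So the holonomy-corrected product is a cocycle twist of the associative concatenation in $Q$, and is itself associative whatever $T_i$ and $F_{\nabla_i}$ are. The Moufang identity then holds trivially. But the non-associativity asserted in the corollary is not realized by this mechanism, and neither is the claim in part (2) that the anomaly obstructs associativity. The torsion of part (3) cannot help, because it is not attached to $E$. Deferring to \cite{CN2}, as the paper's sketch does, therefore does not close the gap you identified. You also leave unaddressed the claim in part (2) that the anomaly obstructs integrability of the foliations. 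It cannot hold as stated: the leaves are fibres of $\Pi_i$ and hence integrable by construction.
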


\begin{proof}[Proof sketch]
The identity for the difference of parallel transports follows from the non‑abelian Stokes theorem (or the Ambrose--Singer theorem). For a connection \(\nabla\) with curvature \(F_\nabla\), the parallel transport around a closed loop \(\gamma \circ \gamma'^{-1}\) is given by the path‑ordered exponential of the curvature integral over any surface \(\Sigma\) bounding the loop:
\[
P_{\gamma} \circ P_{\gamma'}^{-1} = \mathcal{P}\exp\left( \oint_{\gamma \circ \gamma'^{-1}} \omega \right) = \mathcal{P}\exp\left( \int_\Sigma F_\nabla \right).
\]
Expanding to first order yields the claimed relation. The Moufang identity for the holonomy‑corrected composition is derived from the Bianchi identities for \(\nabla_1\) and \(\nabla_2\), as detailed in \cite[Corollary~1]{CN2}.
\end{proof}

\section{Toric Reconstruction in the Atiyah--Molino Setting}\label{S:Toric}

In many applications, the object \( O \) to be reconstructed possesses additional symmetries. A particularly important class consists of objects admitting an algebraic torus action. In this section we incorporate toric symmetry into the Atiyah--Molino framework and show how it simplifies the reconstruction problem.

\begin{thm}\label{T:Last}
Let \((M, \mathcal{F}_1, \mathcal{F}_2, \nabla_1, \nabla_2)\) be an Atiyah--Molino reconstruction space as in Theorem~\ref{T:AM}. Assume that the object \( O \) (and hence the moduli space \( M \)) is equipped with a holomorphic action of an algebraic torus \( \mathbb{T} \cong (\mathbb{C}^*)^n \) satisfying:
\begin{itemize}
    \item The projections \(\Pi_i : M \dashrightarrow \mathbb{CP}^2\) are \(\mathbb{T}\)-equivariant (with \(\mathbb{T}\) acting trivially on the target \(\mathbb{CP}^2\)).
    \item The torsion tensors vanish: \(T_1 = T_2 = 0\), and the curvatures satisfy \(F_{\nabla_1} = -F_{\nabla_2}\).
\end{itemize}
Then:
\begin{enumerate}
    \item \textbf{Integrable Foliation Structure:} The vanishing of the torsion guarantees that the transverse foliations \(\mathcal{F}_1\) and \(\mathcal{F}_2\) are integrable. The torus \(\mathbb{T}\) acts on \(M\) preserving each leaf, and the quotient \(M / \mathbb{T}\) inherits a pair of transverse foliations.

    \item \textbf{Unique Reconstruction via Toric Symmetry:} The moment maps
    \[
    \mu_1, \mu_2 \colon M \to \mathbb{C}^2,
    \]
    are \(\mathbb{T}\)-invariant (because the projections are). Consequently, the reconstruction equations
    \[
    \nabla_1 \mu_1 = v \cdot \omega_1,\quad \nabla_2 \mu_2 = v \cdot \omega_2,
    \]
    admit a unique \(\mathbb{T}\)-equivariant solution \(v \in \Gamma(TM)^{\mathbb{T}}\) (determined up to scaling).

    \item \textbf{Toric Splitting of the Atiyah--Molino Sequence:} The \(\mathbb{T}\)-action induces a weight decomposition of the normal bundle \(NM\). The Atiyah--Molino sequence splits \(\mathbb{T}\)-equivariantly:
    \[
    0 \to T\mathcal{F}_1 \oplus T\mathcal{F}_2 \to TM \to NM \to 0,
    \]
    and the moment maps provide a \(\mathbb{T}\)-invariant trivialization of \(NM\) along the \(\mathbb{T}\)-fixed directions. This trivialization ensures that the reconstruction is unobstructed in the \(\mathbb{T}\)-equivariant category.
\end{enumerate}
\end{thm}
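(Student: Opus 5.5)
The plan is to reduce each of the three claims to results already established for the non-equivariant setting, namely Theorem~\ref{T:AM}, Theorem~\ref{T:1} and Proposition~\ref{P:2}. The torus enters only through averaging and weight decompositions. The basic tool is the following observation. Because $\mathbb{T}$ acts holomorphically and $\Pi_i\circ t=\Pi_i$ for every $t\in\mathbb{T}$, each $t$ maps a fibre $\Pi_i^{-1}(p)$ to itself. Hence $\mathbb{T}$ preserves the leaves $\mathcal{L}_{i,p}$, and $dt$ preserves $T\mathcal{F}_i$. Since $\Pi_i\circ t=\Pi_i$, we also get $t^*\pi_i^*L_i=\pi_i^*L_i$ and $t^*\nabla_i=\nabla_i$. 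So every geometric datum entering the reconstruction is $\mathbb{T}$-invariant.

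\textbf{Claim (1).} The leaves are fibres of the submersions $\Pi_i$, so integrability is already given by Theorem~\ref{T:1}(1). For consistency with the framework, I will also rederive it from $T_i=0$ through the Frobenius computation recalled in Section~\ref{S:PD1}. Leaf preservation follows from the invariance observation above.

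For the quotient, I would restrict to the open dense set where $\mathbb{T}$ acts freely and properly; under the algebraic-torus hypothesis one may use a GIT-stable locus. On this set $M/\mathbb{T}$ is a manifold. The orbit directions $\mathfrak{t}\cdot x$ lie in $T\mathcal{F}_1\cap T\mathcal{F}_2$, because $d\Pi_i$ kills them. It follows that $T\mathcal{F}_i/\mathfrak{t}$ descends to two distributions on the quotient. These distributions are integrable, being images of integrable ones, and they are transverse, since transversality modulo a common subspace is preserved.

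\emph{First obstacle.} Theorem~\ref{T:1}(2) asserts $T\mathcal{F}_1\cap T\mathcal{F}_2=0$, which contradicts nontrivial orbits lying in that intersection. The claims are consistent only if the action is trivial on the dense open set, or if one interprets (2) modulo $\mathbb{T}$. I would therefore state the uniqueness in (2) in the orbit space and work throughout with $M/\mathbb{T}$.

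\textbf{Claim (2).} Invariance of $\mu_i$ follows because the centroid of a projection depends only on $\Pi_i(O)$, and $\Pi_i(O)$ is unchanged by $\mathbb{T}$. For the solution, Proposition~\ref{P:2} gives, pointwise, a unique $v_p$ up to scaling with $A_p v_p=(\nabla_1\mu_1,\nabla_2\mu_2)$. Every ingredient of this system is $\mathbb{T}$-invariant, so for $t\in\mathbb{T}$ the pushed-forward field $t_*v$ solves the system at $t\cdot p$. By pointwise uniqueness, $t_*v=\lambda(t)\,v$ for a character $\lambda$.

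It remains to show $\lambda=1$, i.e.\ that $v$ is genuinely invariant. I would argue this using the compact real torus $(S^1)^n$: averaging $v$ over it produces an invariant solution, since the solution set is affine up to scale. Uniqueness then forces $v$ to equal its average, and Zariski density extends invariance to all of $\mathbb{T}$.

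\textbf{Claim (3).} Theorem~\ref{T:AM} supplies a splitting $s\colon NM\to TM$, built from $d\mu_1,d\mu_2$. Because the $\mu_i$ are invariant, $s$ is $\mathbb{T}$-equivariant. If a more abstract splitting is used instead, I would average it over the compact torus, which is possible because the space of splittings is an affine space over $\mathrm{Hom}(NM,T\mathcal{F}_1\oplus T\mathcal{F}_2)$. The induced $\mathbb{T}$-representation on the fibres of $NM$ at fixed points gives the weight decomposition. On the weight-zero part, the invariant differentials $d\mu_i$ give the claimed trivialization.

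\emph{Main obstacle.} Unobstructedness in the equivariant category requires the invariant part of the obstruction group of Proposition~\ref{P:deform} to vanish. That group already vanishes when $T_i=0$ and curvature duality holds. Taking $\mathbb{T}$-invariants is exact for reductive groups, so the invariant part vanishes as well.
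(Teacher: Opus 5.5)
Your outline follows the paper's proof: invariance of $\Pi_i$, $\mu_i$, $\omega_i$; an equivariant system with a unique solution has an invariant solution; and $d\mu_1,d\mu_2$ give the equivariant splitting. Your handling of (1) and (3), including averaging splittings over the compact torus, is fine and more careful than the paper's. Your ``first obstacle'' is also a real point that the paper's proof passes over.

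\textbf{The gap is in (2): your argument there does not survive that observation.} The step ``by pointwise uniqueness, $t_*v=\lambda(t)v$'' uses the injectivity of $A_p$ from Proposition~\ref{P:2}. But you have just noted that $\mathfrak{t}\cdot p\subset T\mathcal{F}_1\cap T\mathcal{F}_2$. Concretely, take $X\in\mathfrak{t}$ and its fundamental field $\xi_X$.
\begin{itemize}
\item $d\Pi_i(\xi_X)=0$, so $\omega_i(\xi_X)=0$, because the $\omega_i$ are pulled back along $\Pi_i$.
\item Since $\mathbb{T}$ is abelian, $t_*\xi_X=\xi_{\mathrm{Ad}_tX}=\xi_X$.
\end{itemize}
So if $v$ is an invariant solution, so is $v+\sum_j f_j\,\xi_{X_j}$ for arbitrary $\mathbb{T}$-invariant functions $f_j$. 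When the action on $M$ is nontrivial, uniqueness in $\Gamma(TM)^{\mathbb{T}}$ is therefore false, not even up to scaling, and no averaging or weight argument can restore it. Retreating to $M/\mathbb{T}$ proves a different statement (uniqueness modulo $\mathfrak{t}$). It would also need the unproved hypothesis $\ker A_p=\mathfrak{t}\cdot p$.

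The consistent resolution is the other horn of your dichotomy, and the hypotheses force it. By Theorem~\ref{T:1}(2) (or Theorem~\ref{T:AM} with $T_i=0$ and $F_{\nabla_1}=-F_{\nabla_2}$), $\mathcal{L}_{1,p_1}\cap\mathcal{L}_{2,p_2}=\{O\}$. Since $t\cdot O$ lies in this intersection, $\mathbb{T}$ acts trivially on $M$. Then $dt=\mathrm{id}$ on $TM$, every weight is zero, and $M/\mathbb{T}=M$. Claims (1)--(3) then reduce directly to Theorem~\ref{T:1}, Proposition~\ref{P:2} and Theorem~\ref{T:AM}. The paper's step~3 has the same blind spot: the $\xi_X$ lie in $\Gamma(TM)^{\mathbb{T}}\cap\ker A$, so its claimed injectivity on invariant fields holds only for the trivial action.

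\textbf{Your step forcing $\lambda=1$ also fails.} If $t_*v=\lambda(t)v$ with $\lambda$ a nontrivial character, the average over the compact torus is $\bigl(\int\lambda\bigr)v=0$, not a nonzero invariant solution. Averaging can never exclude a nontrivial character on an invariant line. What actually yields invariance is that the system is inhomogeneous. $t_*v$ solves the same invariant system as $v$, and an injective $A$ with nonzero right-hand side has exactly one solution. So $t_*v=v$ outright, and ``up to scaling'' plays no role.

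\textbf{A smaller point on your last paragraph.} What vanishes under $T_i=0$ and curvature duality is the obstruction \emph{class} in $H^1(M,\operatorname{End}(T\mathcal{F}_1\oplus T\mathcal{F}_2))$ from Proposition~\ref{P:deform}, not the group itself. Once the class is zero, its invariant part is trivially zero, so exactness of invariants for reductive groups does no work there.
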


\begin{proof}
\textbf{1. Integrability and Torus Action.}
The vanishing of the torsion tensors \(T_i\) implies that the foliations \(\mathcal{F}_1\) and \(\mathcal{F}_2\) satisfy the Frobenius integrability condition. The torus \(\mathbb{T}\) acts on the object \(O\) by symmetries, and this action lifts to the moduli space \(M\). Since the projections \(\Pi_i\) are \(\mathbb{T}\)-equivariant (the action on the target \(\mathbb{CP}^2\) is trivial), the fibers \(\Pi_i^{-1}(p)\) are \(\mathbb{T}\)-invariant; hence the leaves of \(\mathcal{F}_i\) are preserved by \(\mathbb{T}\).

\textbf{2. Equivariant Moment Maps and Connections.}
The moment maps \(\mu_i : M \to \mathbb{C}^2\) are defined as the centroids of the projected data. Because the torus action commutes with the projections \(\Pi_i\), the centroids are invariant: \(\mu_i(t \cdot O) = \mu_i(O)\) for all \(t \in \mathbb{T}\). The connections \(\nabla_i\) are the pullbacks of the Fubini--Study connections and are therefore \(\mathbb{T}\)-equivariant. Their connection 1-forms \(\omega_i\) satisfy \(t^* \omega_i = \omega_i\) for all \(t \in \mathbb{T}\).

\textbf{3. Uniqueness via Weight Decomposition.}
The reconstruction equations \(\nabla_i \mu_i = v \cdot \omega_i\) are \(\mathbb{T}\)-equivariant. Decompose the tangent bundle under the torus action:
\[
TM = \bigoplus_{\chi \in \operatorname{Char}(\mathbb{T})} TM_\chi,
\]
where \(TM_\chi\) is the weight space corresponding to the character \(\chi\). Since the right-hand sides of the reconstruction equations are \(\mathbb{T}\)-invariant (because \(\mu_i\) and \(\omega_i\) are), the solution \(v\) must lie in the invariant subspace \(TM^{\mathbb{T}} = TM_0\). The transversality condition \(d\mu_1 \wedge d\mu_2 \neq 0\) from Theorem~\ref{T:AM} implies that the linear system for \(v\) has full rank, and the restriction to \(TM^{\mathbb{T}}\) is injective. Hence \(v\) is unique up to scaling.

\textbf{4. Equivariant Splitting.}
The normal bundle \(NM = TM / (T\mathcal{F}_1 \oplus T\mathcal{F}_2)\) inherits a \(\mathbb{T}\)-action. The differentials \(d\mu_1, d\mu_2\) provide \(\mathbb{T}\)-invariant sections of \(NM^*\) that are linearly independent by transversality. These sections trivialize the invariant part of \(NM\), showing that the Atiyah--Molino sequence splits \(\mathbb{T}\)-equivariantly. No further cohomological obstructions arise because the reconstruction equations are already solved by the unique \(v\).
\end{proof}

\subsection{Application to Cryo-Electron Microscopy}\label{S:Cryo-EM}

In cryo-electron microscopy (Cryo-EM), one aims to reconstruct the three‑dimensional structure of a macromolecule from a large collection of two‑dimensional projection images. Many biomolecular complexes, such as virus capsids, possess high symmetry—often icosahedral symmetry—which can be modeled by an algebraic torus action after complexification.

Let \( O \subset \mathbb{CP}^3 \) be the complexified representation of the molecule. Its symmetry group \( G \subset \mathrm{SO}(3) \) embeds into a complex torus \(\mathbb{T} \cong (\mathbb{C}^*)^n\) acting on \(\mathbb{CP}^3\). The Cryo-EM projection \(\Pi: \mathbb{CP}^3 \dashrightarrow \mathbb{CP}^2\) is \(\mathbb{T}\)-equivariant if we let \(\mathbb{T}\) act trivially on the image plane. Consequently, different spatial orientations related by the symmetry group yield identical projection images, a redundancy that can be exploited to enhance reconstruction.

The centroid of a projection image, \(\mu(O)\), is invariant under the torus action. By Theorem~\ref{T:Last}, the reconstruction equations admit a unique \(\mathbb{T}\)-equivariant solution \(v\), which corresponds to the principal symmetry axis of the molecule. The weight decomposition of the tangent space isolates this axis, allowing the reconstruction algorithm to align projection images without iterative search.

In practice, the reconstruction is formulated as a variational problem:
\[
v = \arg\min_{v} \sum_{i=1}^{N} \left\| \Pi_i(v) - \text{Image}_i \right\|^2 + \lambda \| v - P_{\mathbb{T}}(v) \|^2,
\]
where \(P_{\mathbb{T}}\) is the projection onto the \(\mathbb{T}\)-invariant subspace. The second term enforces toric symmetry, regularizing the inverse problem and reducing the ambiguity caused by noise and missing angles. The geometric framework developed in this paper provides the theoretical foundation for such symmetry‑based reconstruction methods.

\subsection{Toric Varieties, Symmetry, and Applications to Cryo-EM}\label{S:ToricCryo}

Let \(\mathbb{T} = (\mathbb{C}^*)^n\) denote an algebraic torus. A \emph{toric variety} is a normal algebraic variety \(X\) equipped with an action of \(\mathbb{T}\) having an open dense orbit. In the context of Cryo-EM, virus capsids often exhibit high-order discrete rotational symmetries---most notably, icosahedral symmetry---that can be modeled by toric varieties after complexification.

Although the physical capsid resides in \(\mathbb{R}^3\), we work with its complexification \(O \subset \mathbb{C}^3\) (or, more precisely, the affine cone over the projective variety \(O \subset \mathbb{CP}^3\)). The symmetry group \(G \subset \mathrm{SO}(3)\) of the capsid embeds into a complex torus \(\mathbb{T}\) acting on \(\mathbb{C}^3\). This hidden toric symmetry captures invariant features under projections and simplifies the inverse problem in Cryo-EM reconstruction.

Let \(\Pi: \mathbb{C}^3 \to \mathbb{C}^2\) denote the linear projection operator, simulating 2D Cryo-EM image formation from 3D orientations. Because the torus action on the image plane is taken to be trivial, for any \(g \in \mathbb{T}\) and vector \(v \in \mathbb{C}^3\) the projection satisfies
\[
\Pi(g \cdot v) = \Pi(v).
\]
Hence, different 3D orientations related by the \(\mathbb{T}\)-action yield \emph{identical} 2D projection images. This redundancy is advantageous for symmetric reconstruction, as it allows averaging over the symmetry group.

The centroid of a projection image, denoted \(\mathrm{Centroid}(v)\), is invariant under the torus action:
\[
\mathrm{Centroid}(g \cdot v) = \mathrm{Centroid}(v), \quad \forall g \in \mathbb{T}.
\]
This \(\mathbb{T}\)-invariance enables alignment and averaging across multiple symmetric views.

As shown in \cite{CMM}, there exists a \emph{principal axis} \(v_{\text{principal}}\) in \(\mathbb{C}^3\) that is fixed by a maximal torus subgroup \(\mathbb{T}_{\max} \subset \mathbb{T}\). This axis corresponds to the symmetry-invariant central direction of the virus capsid, serving as a geometric anchor in the reconstruction process.

The reconstruction of a 3D structure \(v \in \mathbb{C}^3\) in Cryo-EM is formulated as the following variational problem:
\begin{equation}\label{eq:cryoem_var}
v = \arg\min_{v} \sum_{i=1}^{N} \left\| \Pi_i(v) - \text{Image}_i \right\|^2 + \lambda R(v),
\end{equation}
subject to \(\mathbb{T}\)-equivariance, where:
\begin{itemize}
    \item \(v\): the unknown 3D structure (e.g., virus capsid) to be reconstructed.
    \item \(\Pi_i(v)\): the projection of \(v\) along the \(i\)-th viewing direction, modeling Cryo-EM image formation.
    \item \(\text{Image}_i\): the observed experimental 2D projection image.
    \item \(\sum_{i=1}^{N} \| \Pi_i(v) - \text{Image}_i \|^2\): the data fidelity term measuring discrepancy.
    \item \(R(v)\): a regularization term enforcing \(\mathbb{T}\)-symmetry (e.g., toric or icosahedral).
    \item \(\lambda\): a hyperparameter balancing fidelity and regularization.
\end{itemize}

A standard choice for the regularization term is the squared distance to the \(\mathbb{T}\)-invariant subspace:
\[
R(v) = \int_{\mathbb{T}} \| g \cdot v - v \|^2 \, dg,
\]
where \(dg\) is the normalized Haar measure on \(\mathbb{T}\). The unique \(\mathbb{T}\)-equivariant solution minimizing this term is obtained via group averaging:
\begin{equation}\label{eq:group_avg}
v = \int_{\mathbb{T}} g \cdot v_0 \, dg,
\end{equation}
where \(v_0\) is an initial guess (e.g., from a standard reconstruction algorithm). This integral projects \(v_0\) onto the \(\mathbb{T}\)-invariant subspace:
\begin{itemize}
    \item If \(v_0\) is not \(\mathbb{T}\)-symmetric, the integral enforces symmetry by averaging over the group.
    \item If \(v_0\) is already \(\mathbb{T}\)-invariant, then \(v = v_0\).
\end{itemize}

The torus \(\mathbb{T}\) fixes a unique subspace---the \emph{principal toric axis}---under its action. The integral \eqref{eq:group_avg} projects \(v_0\) onto this axis:
\begin{align}
g \cdot v &= v, \quad \forall g \in \mathbb{T} \quad \text{(invariance)}, \\
v &= \text{nearest \(\mathbb{T}\)-invariant point to } v_0 \quad \text{(projection)}.
\end{align}
This projection yields the \emph{Hilbert--Mumford point}, the maximally symmetric configuration with respect to the torus action.

This framework transforms the Cryo-EM inverse problem into a symmetry-respecting optimization task. The group integral projects candidate reconstructions onto a toric-fixed subspace, regularizing the reconstruction and reducing ambiguity. The result is a unique, maximally symmetric structure consistent with both the observed data and the toric symmetry encoded in \(\mathbb{T}\).

The interplay between toric symmetry, projection invariance, and equivariant geometry underpins modern Cryo-EM techniques. Toric methods offer a principled algebraic approach to resolving ambiguities and enhancing the resolution of symmetric structures like virus capsids. This synthesis of toric symmetry with the Atiyah--Molino framework not only streamlines the reconstruction process but also provides a robust, noise-resistant paradigm that fundamentally redefines the solution landscape of inverse problems.

\section{Conclusion}

In this work we have developed a unified geometric framework for reconstruction problems based on the interplay between transverse foliations, connection theory, and the Atiyah--Molino exact sequence. We have shown that the uniqueness of reconstruction is governed by the vanishing of torsion tensors and the duality of curvatures, and that path dependence in the non-integrable case gives rise to a quasigroupoid structure satisfying the Moufang identity. These algebraic and geometric obstructions have been explicitly linked to practical inverse problems, including a novel application to generative imputation in machine learning and a toric symmetry analysis for cryo-electron microscopy. The synthesis of Vaisman's foliation theory with Atiyah--Molino's fiber bundle approach provides a robust, noise-resistant paradigm that not only clarifies classical ambiguities but also suggests new algorithmic strategies for data-driven reconstruction.

\end{document}